\documentclass{amsart}

\RequirePackage{fix-cm}
\RequirePackage[l2tabu, orthodox]{nag}
\RequirePackage[all, warning]{onlyamsmath}

\usepackage{latexsym}
\usepackage{amsmath,amsfonts,amssymb}
\usepackage{amsthm}
\usepackage{graphicx,color,xcolor}
\usepackage{overpic}
\usepackage{subcaption}
\usepackage{braket}
\usepackage[bookmarks=true,bookmarksnumbered=true]{hyperref}
\hypersetup{
    colorlinks=true,
    citecolor=blue,
    linkcolor=blue,
    urlcolor=orange,
}
\usepackage{multirow,bigdelim}
\usepackage{enumerate}
\usepackage{cleveref}
\usepackage{comment}
\numberwithin{equation}{section}
\usepackage{doi}

\newtheorem{theorem}{Theorem}[section]
\newtheorem{lemma}[theorem]{Lemma}

\newtheorem{proposition}[theorem]{Proposition}

\newtheorem{definition}[theorem]{Definition}

\newtheorem{example}[theorem]{Example}
\newtheorem{remark}[theorem]{Remark}
\crefname{section}{Section}{Sections}
\crefname{appendix}{Appendix}{Appendices}
\crefname{theorem}{Theorem}{Theorems}
\crefname{lemma}{Lemma}{Lemmas}
\crefname{corollary}{Corollary}	{Corollaries}			
\crefname{proposition}{Proposition}{Propositions}	
\crefname{claim}{Claim}{Claims}
\crefname{conjecture}{Conjecture}{Conjectures}			
\crefname{definition}{Definition}{Definitions}
\crefname{problem}{Problem}{Problems}
\crefname{example}{Example}{Examples}
\crefname{remark}{Remark}{Remarks}
\crefname{figure}{Figure}{Figures}
\crefname{footnote}{Footnote}{Footnotes}
\crefname{equation}{}{}
\crefname{enumi}{}{}
\crefformat{enumi}{(#2#1#3)}
\Crefformat{enumi}{(#2#1#3)}

\newcommand{\QED}{\hfill \ensuremath{\Box}}
\newcommand{\R}{\mathbb{R}}

\newcommand{\ld}{,\ldots,}

\newcommand{\wt}{\widetilde}

\newcommand{\cv}{\mathrm{cv}}

\newcommand{\norm}[1]{\left\|#1\right\|}

\newcommand{\HD}{\mathrm{HD}}
\newfont{\bg}{cmr9 scaled\magstep2}
\newcommand{\bigzerol}{\smash{\lower1.0ex\hbox{\bg 0}}}
\DeclareMathOperator{\rank}{rank}
\DeclareMathOperator{\corank}{corank}
\DeclareMathOperator{\img}{Im}

\DeclareMathOperator{\codim}{codim}
\DeclareMathOperator{\id}{id}

\newcommand{\red}[1]{{\color{black} #1}}

\title[A refined transversality theorem on linear perturbations]{A refined transversality theorem on
\\ 
linear perturbations and its applications}

\author{Shunsuke Ichiki
}
\address{
Department of Mathematical and Computing Science,
School of Computing,
Tokyo Institute of Technology,
Tokyo 152-8552,
Japan}
\email{ichiki@c.titech.ac.jp}

\begin{document}
\date{}
\begin{abstract}
In this paper, we establish a refined transversality theorem on linear perturbations from a new perspective of Hausdorff measures.
Furthermore, we give its applications not only to singularity theory but also to multiobjective optimization.
\end{abstract}
\subjclass[2020]{58K30, 57R45}
\keywords{transversality theorem, genericity, Hausdorff measure, Pareto set} 
\maketitle
\noindent

\section{Introduction}\label{sec:intro}

Transversality theorems are fundamental tools for investigating generic mappings.
In 1973, Mather gave a striking transversality theorem on generic projections as the main theorem of the celebrated paper \cite{Mather1973}.
Let $\mathcal{L}(\R^{m},\R^{\ell})$ be the space consisting of all linear mappings of $\R^{m}$ into $\R^{\ell}$. 
In what follows, we will regard $\mathcal{L}(\R^m, \R^\ell)$ as the Euclidean space $(\R^m)^\ell$ in the obvious way.
Briefly, Mather's result is a transversality theorem for a composition $\pi\circ f:X\to \R^\ell$ of a $C^\infty$ embedding $f$ from a $C^\infty$ manifold $X$ into $\R^m$ and a projection $\pi\in \mathcal{L}(\R^m,\R^\ell)\setminus\Sigma$, where $\Sigma$ is a subset of $\mathcal{L}(\R^m,\R^\ell)$ with Lebesgue measure zero.
The theorem yields important applications on a composition of a $C^\infty$ embedding and a projection (e.g. Theorems 2 and 3 of \cite{Mather1973}).

After that, in \cite{Ichiki2018C}, for a $C^\infty$ immersion $f$ from a $C^\infty$ manifold $X$ into an open set $V$ of $\R^m$ and an arbitrary $C^\infty$ mapping $g:V\to \R^\ell$, 
a transversality theorem on the $1$-jet extension of a composition of $f$ and a mapping obtained by generically linearly perturbing $g$, that is $(g+\pi)\circ f:X\to \R^\ell$  $(\pi\in \mathcal{L}(\R^m,\R^\ell)\setminus\Sigma)$, is given, where $\Sigma$ is a subset of $\mathcal{L}(\R^m,\R^\ell)$ with Lebesgue measure zero.

Moreover, in \cite{Ichiki2018T}, the transversality theorem of \cite{Ichiki2018C} on generic linear perturbations described above had been improved so that it works even in the case where manifolds and mappings are not necessarily of class $C^\infty$ (see \cref{thm:appmain1_t} of this paper).
The theorem also yields applications on a composition of an immersion and a generically linearly perturbed mapping (as in \cref{thm:Morse2,thm:immersion2} of this paper).
Moreover, it also gives an application not only to singularity theory but also to multiobjective optimization (see \cref{thm:simplicial_generic} of this paper). 
Namely, it has been a useful tool to yield various applications on generically linearly perturbed mappings.

However, the transversality theorem (\cref{thm:appmain1_t}) is still in the stage of Lebesgue measures and therefore are its applications since Thom's parametric transversality theorem is used as a lemma in its proof.
On the other hand, in \cite{Ichiki2022a}, Thom's parametric transversality theorem had been already improved from a new perspective of Hausdorff measures which generalize Lebesgue measures.
Thus, the purpose of this paper is to give a refined version of the transversality theorem from the viewpoint of  Hausdorff measures and its various applications.

In \cite{Ichiki2018C,Ichiki2018T}, not only immersions but also injections are investigated.
More precisely, in \cite{Ichiki2018C}, for a $C^\infty$ injection $f$ from a $C^\infty$ manifold $X$ into an open set $V$ of $\R^m$ and a $C^\infty$ mapping $g:V\to \R^\ell$, a specialized transversality theorem on crossings of a composition of $f$ and a mapping obtained by generically linearly perturbing $g$ and its applications are also given \red{from the viewpoint of Lebesgue measures}.
Furthermore, in \cite{Ichiki2018T}, the specialized transversality theorem and some of its applications have been improved so that they work even in the case where manifolds and mappings are not necessarily of class $C^\infty$.
\red{
On the other hand, for refined versions of these injective cases from the viewpoint of Hausdorff measures, there are still some supplementary problems that remain unsolved (for details, see \cref{rem:thm:appmain1_rem}~\cref{rem:thm:appmain1_rem_injection}).
Thus, in this paper, we do not treat injective cases, but we give an application to multiobjective optimization from the viewpoint of singularity theory and differential topology, which is a refined version of a result obtained in \cite{Hamada2019} (\cref{thm:simplicial_generic} in this paper) from a new perspective of Hausdorff measures. 
}

The remainder of this paper is organized as follows.
In \cref{sec:main}, we state the main theorem.
In \cref{sec:preparation}, we review the definition of Hausdorff measures and prepare an essential tool for the proof of the main theorem, and in \cref{sec:appmain1}, we show the main theorem.
In \cref{sec:appfurther}, we give applications of the main theorem.
In \cref{sec:appmulti}, we also give an application of the main theorem to multiobjective optimization from the viewpoint of singularity theory and differential topology, and in \cref{sec:simpliciality_generic_proof}, we give the proof of the application.

\section{The main theorem}\label{sec:main}
In this paper, unless otherwise stated, all manifolds are without boundary and assumed to have a countable basis. 
In this section, we prepare some notations and state the main theorem.
First, we review the definition of transversality. 
\begin{definition}\label{def:transversality}
{\rm 
Let $X$ and $Y$ be $C^r$ manifolds, and $Z$ a $C^r$ submanifold of $Y$ ($r\geq1$).
Let $f : X\to Y$ be a $C^1$ mapping. 
\begin{enumerate}[(1)]
\item 
We say that $f:X\to Y$ is \emph{transverse} to $Z$ \emph{at $x\in X$} if $f(x)\not\in Z$ or in the case of $f(x)\in Z$, the following holds: 
\begin{align*}
df_x(T_xX)+T_{f(x)}Z=T_{f(x)}Y.
\end{align*}
\item 
We say that $f:X\to Y$ is \emph{transverse} to $Z$ if for any $x\in X$, the mapping $f$ is transverse to $Z$ at $x$. 
\end{enumerate}
}
\end{definition}
Let $X$ be a $C^r$ manifold $(r\geq 2)$ of dimension $n$, and $J^1(X,\R^\ell)$ the space of $1$-jets of mappings of $X$ into $\R^\ell$. 
Then, note that $J^1(X,\R^\ell)$ is a $C^{r-1}$ manifold. 
For a given $C^r$ mapping $f:X\to \R^\ell$, the $1$-jet extension $j^1 f:X\to J^1(X,\R^\ell)$ is defined by $q \mapsto j^1 f(q)$. 
Then, notice that $j^1 f$ is of class $C^{r-1}$. 
For details on $J^1(X,\R^\ell)$ and $j^1 f$, see for example, \cite{Golubitsky1973}. 

Now, let $\set{(V_\lambda ,\varphi _\lambda )}_{\lambda \in \Lambda}$ be a coordinate neighborhood system of $X$. 
Let $\Pi :J^1(X,\R^\ell)\to X\times \R^\ell$ be the natural projection defined by $\Pi(j^1f(q))=(q,f(q))$.
Let $\Phi _\lambda :\Pi^{-1}(V_\lambda \times \R^\ell)\to \varphi _\lambda (V_\lambda)\times \R^\ell \times J^1(n,\ell)$ be the homeomorphism defined by 
\begin{align*} 
\Phi _\lambda \left(j^1f(q)\right)=\left(\varphi _\lambda (q),f(q),j^1(\psi_{_\lambda} \circ f\circ \varphi _\lambda ^{-1}\circ \wt{\varphi} _\lambda)(0)\right), 
\end{align*} 
where $J^1(n, \ell)=\set{ j^1f(0)| f : (\R^n, 0) \to  (\R^\ell, 0)}$ and $\wt{\varphi} _\lambda : \R^n\to \R^n$ (resp., $\psi_{\lambda} : \R^\ell \to \R^\ell$) is the parallel translation satisfying $\wt{\varphi} _\lambda(0)=\varphi _\lambda (q)$ (resp., $\psi_{\lambda}(f(q))=0$). 
Then, $\set{(\Pi^{-1}(V_\lambda \times \R^\ell), \Phi _\lambda )}_{\lambda \in \Lambda}$ is a coordinate neighborhood system of $J^1(X,\R^\ell)$. 

Set 
\begin{align*} 
S^k=\set{j^1f(0)\in J^1(n,\ell)|\corank Jf(0)=k}, 
\end{align*}
where $\corank Jf(0)=\min \set{n,\ell}-\rank Jf(0)$ and $k=1,2,\ldots ,\min \set{n, \ell}$. 
Set 
\begin{align*}
S^k(X,\R^\ell)=\bigcup_{\lambda \in \Lambda}\Phi ^{-1}_\lambda \left(\varphi _\lambda (V_\lambda )\times \R^\ell \times S^k\right). 
\end{align*}
Then, the set $S^k(X,\R^\ell)$ is a submanifold of $J^1(X,\R^\ell)$ satisfying  
\begin{align*}
\codim S^k(X,\R^\ell)&=\dim J^1(X,\R^\ell)- 
\dim S^k(X,\R^\ell)=(n-v+k)(\ell-v+k), 
\end{align*}
where $v=\min \set{n, \ell}$. 
(For details on $S^k$ and $S^k(X,\R^\ell)$, see 
\cite[pp.~60--61]{Golubitsky1973}). 
\begin{proposition}[\cite{Ichiki2018T}]\label{thm:appmain1_t} 
Let $f:X\to V$ be a $C^r$ immersion, and $g:V\to \R^\ell$ a $C^r$ mapping, where $r$ is an integer satisfying $r\geq 2$, $X$ is a $C^r$ manifold and $V$ is an open subset of $\R^m$.
Let $k$ be an integer satisfying $1\leq k\leq \min\set{\dim X,\ell}$. 
If 
\begin{align*}
r\geq\max\set{\dim X-\codim S^k(X,\R^\ell),0}+2,
\end{align*}
then the set
\begin{align*}
\Sigma_k=\set{\pi\in \mathcal{L}(\R^m,\R^\ell)|\mbox{$j^1((g+\pi)\circ f)$ is not transverse to $S^k(X,\R^\ell)$}}
\end{align*}
has Lebesgue measure zero in $\mathcal{L}(\R^m,\R^\ell)$.
\end{proposition}

As a side note, \cite[Theorem~1]{Ichiki2018C} is \cref{thm:appmain1_t} in the case where all manifolds and mappings are of class $C^\infty$.
Namely, \cref{thm:appmain1_t} is an improvement of \cite[Theorem~1]{Ichiki2018C}.
The following is the main theorem of this paper, which is a refined version of \cref{thm:appmain1_t} from a new perspective of Hausdorff measures.
\begin{theorem}\label{thm:appmain1} 
Let $f:X\to V$ be a $C^r$ immersion, and $g:V\to \R^\ell$ a $C^r$ mapping, where $r$ is an integer satisfying $r\geq 2$, $X$ is a $C^r$ manifold and $V$ is an open subset of $\R^m$.
Let $k$ be an integer satisfying $1\leq k\leq \min\set{\dim X,\ell}$. 
Set
\begin{align*}
\Sigma_k=\set{\pi\in \mathcal{L}(\R^m,\R^\ell)|\mbox{$j^1((g+\pi)\circ f)$ is not transverse to $S^k(X,\R^\ell)$}}.
\end{align*}
Then, the following hold: 
\begin{enumerate}[$(1)$]
\item \label{thm:appmain1sub1}
Suppose $\dim X-\codim S^k(X,\R^\ell)\geq 0$.  
Then, for any real number $s$ satisfying 
\begin{align}\label{eq:appmain1}
s\geq m\ell-1+\frac{\dim X-\codim S^k(X,\R^\ell)+1}{r-1}, 
\end{align}
the set $\Sigma_k$ has $s$-dimensional Hausdorff measure zero in $\mathcal{L}(\R^{m},\R^{\ell})$.
\item \label{thm:appmain1sub2}
Suppose $\dim X-\codim S^k(X,\R^\ell)<0$. 
Then, we have the following:
\begin{enumerate}[{\rm (2a)}]
\item \label{thm:appmain1sub2_h}
For any real number $s$ satisfying 
\begin{align}\label{eq:appmain1_second}
s>m\ell+\dim X-\codim S^k(X,\R^\ell), 
\end{align}
the set $\Sigma_k$ has $s$-dimensional Hausdorff measure zero in $\mathcal{L}(\R^{m},\R^{\ell})$.\label{c}
\item \label{thm:appmain1sub2_e}
For any $\pi\in \mathcal{L}(\R^{m},\R^{\ell})\setminus\Sigma_k$, we have $ j^1((g+\pi)\circ f)(X) \cap S^k(X,\R^\ell)=\varnothing$.
\end{enumerate}
\end{enumerate}
\end{theorem}

\begin{remark}\label{rem:thm:appmain1_rem}
{\rm We give the following remarks on \cref{thm:appmain1}.
\begin{enumerate}[(1)]
\item \label{rem:thm:appmain1_rem_improvement}
We will show that \cref{thm:appmain1} implies \cref{thm:appmain1_t}.
Let $f$ and $g$ (resp. $k$ and $r$) be mappings (resp.  integers) satisfying the assumption of \cref{thm:appmain1_t}.
First, we consider the case $\dim X-\codim S^k(X,\R^\ell)\geq 0$.
Since $r\geq \dim X-\codim S^k(X,\R^\ell)+2$, we can set $s=m\ell$ in \cref{eq:appmain1}.
Thus, since $\Sigma_k$ has $m\ell$-dimensional Hausdorff measure zero in $\mathcal{L}(\R^{m},\R^{\ell})$ by \cref{thm:appmain1}~\cref{thm:appmain1sub1}, $\Sigma_k$ also has Lebesgue measure zero.
In the case $\dim X-\codim S^k(X,\R^\ell)<0$, since we can set $s=m\ell$ in \cref{eq:appmain1_second}, $\Sigma_k$ has $m\ell$-dimensional Hausdorff measure zero in $\mathcal{L}(\R^{m},\R^{\ell})$ by \cref{thm:appmain1}~\cref{thm:appmain1sub2_h}, which implies that $\Sigma_k$ has Lebesgue measure zero.

\item \label{rem:thm:appmain1_rem_infty} 
In \cref{thm:appmain1}~\cref{thm:appmain1sub1}, if all manifolds and mappings are of class $C^\infty$, then for any real number $s$ such that $s>m\ell-1$, there exists a positive integer $r$ satisfying \cref{eq:appmain1}.
Thus, in the $C^\infty$ case, we can replace \cref{eq:appmain1} by
\begin{align*}
    s>m\ell-1.
\end{align*}

\item
In \cref{thm:appmain1}, since $f$ is an immersion, we have $n\leq m$, where $n=\dim X$. 
Thus, in \cref{thm:appmain1}~\cref{thm:appmain1sub2_h}, since  
\begin{align*}
m\ell+\dim X-\codim S^k(X,\R^\ell)
\geq m\ell+n-n\ell
=(m-n)\ell+n
\geq n,
\end{align*}
it is not necessary to assume that $s$ is non-negative.

\item
In \cref{thm:appmain1}, there is an advantage that the domain of $g:V\to \R^\ell$ is not $\R^m$ but an arbitrary open subset $V$ of $\R^m$. 
Suppose $V=\R$. 
Let $g:\R\to \R$ be the function defined by $g(x)=|x|$. 
Since $g$ is not differentiable at $x=0$, we cannot apply \cref{thm:appmain1} to $g:\R\to \R$. 
On the other hand, if $V=\R\setminus\set{0}$, then we can apply the theorem to $g|_V$. 
\item
\red{The assumptions \cref{eq:appmain1} and \cref{eq:appmain1_second} cannot be improved in general (see \cref{rem:main_evaluation_1} and \cref{rem:main_evaluation_2}, respectively), which implies that these are the best evaluations in general.}
\item \label{rem:thm:appmain1_rem_injection}
\red{As explained in \cref{sec:intro}, in \cite{Ichiki2018C,Ichiki2018T}, not only the case where $f$ is an immersion but also the case where $f$ is an injection is investigated from the viewpoint of Lebesgue measures.
By using the refined version of Thom's parametric transversality theorem obtained in \cite{Ichiki2022a} (\cref{thm:main} in this paper), we can certainly update some results on injections obtained in \cite{Ichiki2018C,Ichiki2018T} from the new perspective of Hausdorff measures.
However, for those updated results, it is unsolved whether the evaluations on Hausdorff measures (such as \cref{eq:appmain1} and \cref{eq:appmain1_second} in immersion cases) are the best or not in general.
Thus, in this paper, we do not deal with injective cases, but we give an application of \cref{thm:appmain1} to multiobjective optimization in \cref{sec:appmulti,sec:simpliciality_generic_proof}.
}
\end{enumerate}}
\end{remark}

\section{Preliminaries for the proof of the main theorem}\label{sec:preparation}
First, we review the definition of Hausdorff measures.
Let $s$ be an arbitrary non-negative real number.
Then, the \emph{$s$-dimensional Hausdorff outer measure} on $\R^n$ is defined as follows.
Let $B$ be a subset of $\R^n$.
The $0$-dimensional Hausdorff outer measure of $B$ is the number of points in $B$.
For $s>0$, the $s$-dimensional Hausdorff outer measure of $B$ is defined by 
\begin{align*}
   \lim_{\delta \to 0} \mathcal{H}_{\delta}^s(B),
\end{align*}
where for each $0<\delta \leq \infty$, 
\begin{align*}
\mathcal{H}_{\delta}^s(B)=\inf\Set{\sum_{j=1}^\infty (\mbox{diam}\,C_j)^s|B\subset \bigcup_{j=1}^\infty C_j,\  \mbox{diam}\,C_j\leq \delta}.
\end{align*}
Here, for a subset $C$ of $\R^n$, we write 
\begin{align*}
\textrm{diam}\ C=\sup \set{\|x-y\||x, y\in C}, 
\end{align*}
where $\|z\|$ denotes the Euclidean norm of $z\in \R^n$.
Note that the infimum in $\mathcal{H}_{\delta}^s(B)$ is over all coverings of $B$ by subsets $C_1, C_2,\ldots$ of $\R^n$ satisfying $\textrm{diam}\ C_j\leq \delta$ for all positive integers $j$.

Let $s$ be an arbitrary non-negative real number.
Let $N$ be a $C^r$ manifold $(r\geq 1)$ of dimension $n$, and $\set{(U_\lambda, \varphi_\lambda)}_{\lambda\in \Lambda}$ a coordinate neighborhood system of $N$.
Then, a subset $\Sigma$ of $N$ has \emph{$s$-dimensional Hausdorff measure zero} in $N$ if for any $\lambda\in \Lambda$, the set $\varphi_\lambda(\Sigma \cap U_\lambda)$ has $s$-dimensional Hausdorff (outer) measure zero in $\R^n$.
Note that this definition does not depend on the choice of a coordinate neighborhood system of $N$.
Moreover, for a subset $\Sigma$ of $N$, set
\begin{align*}
    \HD_N(\Sigma)=\inf\Set{s\in[0,\infty)|\mbox{$\Sigma$ has $s$-dimensional Hausdorff measure zero in $N$}},
\end{align*}
which is called the \emph{Hausdorff dimension of $\Sigma$} in $N$.

Next, we will prepare an essential tool (\cref{thm:main}) for the proof of the main theorem, which is a refined version of Thom’s parametric transversality theorem and its improvement which was given by Mather in \cite{Mather1973}.
In order to state \cref{thm:main}, we prepare some definitions.
Let $X$, $A$ and $Y$ be $C^r$ manifolds ($r\geq1$), and $U$ an open set of $X\times A$.
In what follows, by $\pi_1 : U\to X$ and $\pi_2 : U\to A$, we denote the natural projections defined by 
\begin{align*}
\pi_1(x,a)=x,\ \ \pi_2(x,a)=a. 
\end{align*} 
Let $F:U\to Y$ be a $C^1$ mapping. 
For any element $a\in \pi_2(U)$, let 
\begin{align*}
F_a:\pi_1(U\cap (X\times \set{a})) \to Y 
\end{align*}
be the mapping defined by $F_a(x)=F(x,a)$. 
Here, note that $\pi_1(U\cap (X\times \set{a}))$ is open in $X$.
Let $Z$ be a submanifold of $Y$.
Set 
\begin{align*}
\Sigma(F,Z)=\set{a\in \pi_2(U)|\mbox{$F_a$ is not transverse to $Z$}}.
\end{align*}

\begin{definition}\label{def:delta}{\rm Let $X$ and $Y$ be $C^r$ manifolds, and $Z$ a $C^r$ submanifold of $Y$ ($r\geq1$).
Let $f : X\to Y$ be a $C^1$ mapping. 
For any $x\in X$, set  
\begin{align*}
\delta(f, x ,Z)&=\left\{ \begin{array}{ll}
0 & \mbox{if $f(x)\not \in Z$}, \\
\dim Y-\dim (df_x(T_xX)+T_{f(x)}Z) & \mbox{if $f(x)\in Z$}, \\
\end{array} \right.
\\ 
\delta(f,Z)&=\sup \set{\delta(f, x ,Z)|x\in X}. 
\end{align*}
}
\end{definition}
In the case that all manifolds and mappings are of class $C^\infty$, \cref{def:delta} is the definition of \cite[p.~230]{Mather1973}. 
As in \cite{Bruce2000}, $\delta(f,x,Z)$ measures the extent to which $f$ fails to be transverse to $Z$ at $x$.

\begin{definition}\label{def:W}{\rm 
Let $X$, $A$ and $Y$ be $C^r$ manifolds, and $Z$ a $C^r$ submanifold of $Y$ ($r\geq1$).
Let $F:U\to Y$ be a $C^1$ mapping, where $U$ is an open set of $X\times A$. 
Then, we define  
\begin{align*}
W(F,Z)&=\set{(x,a)\in U \mid \delta(F_a, x, Z)=\delta(F, (x,a), Z)>0}, 
\\
\delta^*(F,Z)&=\dim X+\dim Z-\dim Y +\delta(F,Z)
\\
&=\dim X-\codim Z+\delta(F,Z), 
\end{align*} 
where $\codim Z=\dim Y-\dim Z$.
}
\end{definition}
In what follows, we denote the image of a given mapping $f$ by $\img f$.
\begin{theorem}[\cite{Ichiki2022a}]\label{thm:main}
Let $X$, $A$ and $Y$ be $C^r$ manifolds, $Z$ a $C^r$ submanifold of $Y$, and $F:U\to Y$ a $C^r$ mapping, where $U$ is an open set of $X\times A$ and $r$ is a positive integer. 
Then, the following hold: 
\begin{enumerate}[$(1)$]
\item \label{thm:main1}
Suppose $\delta^*(F,Z)\geq 0$.
Then, for any real number $s$ satisfying 
\begin{align}\label{eq:main}
s\geq \dim A-1+\frac{\delta^*(F,Z)+1}{r}, 
\end{align}
the following $(\alpha)$ and $(\beta)$ are equivalent. 
\begin{enumerate}
\item [$(\alpha)$]
The set $\pi_2(W(F,Z))$ has $s$-dimensional Hausdorff measure zero in $\pi_2(U)$. 
\item [$(\beta)$] 
The set $\Sigma(F,Z)$ has $s$-dimensional Hausdorff measure zero in $\pi_2(U)$.
\end{enumerate}
\item \label{thm:main2}
Suppose $\delta^*(F,Z)<0$.
Then, the following hold:
\begin{enumerate}[\upshape(2a)]
\item \label{thm:main2_w}
We have $W(F,Z)=\varnothing$.
\item \label{thm:main2_h}
For any non-negative real number $s$ satisfying $s>\dim A+\delta^*(F,Z)$, the set $\Sigma(F,Z)$ has $s$-dimensional Hausdorff measure zero in $\pi_2(U)$.
\item \label{thm:main2_e}
For any $a\in \pi_2(U)\setminus\Sigma(F,Z)$, we have $\img F_a \cap Z=\varnothing$.
\end{enumerate}
\end{enumerate}
\end{theorem}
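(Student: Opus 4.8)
The easy items come first. The implication $(\beta)\Rightarrow(\alpha)$ is immediate from the inclusion $\pi_2(W(F,Z))\subseteq\Sigma(F,Z)$ (if $(x,a)\in W(F,Z)$ then $\delta(F_a,x,Z)>0$, so $F_a$ is not transverse to $Z$ at $x$) together with the monotonicity of $\mathcal H^s$ under set inclusion. When $\delta^*(F,Z)<0$: whenever $F_a(x)\in Z$ one has $\dim\bigl(d(F_a)_x(T_xX)+T_{F_a(x)}Z\bigr)\le\dim X+\dim Z$, hence $\delta(F_a,x,Z)\ge\codim Z-\dim X$; since $\delta^*(F,Z)<0$ says precisely $\delta(F,Z)<\codim Z-\dim X$, a point of $W(F,Z)$ would satisfy $\codim Z-\dim X\le\delta(F_a,x,Z)=\delta(F,(x,a),Z)\le\delta(F,Z)<\codim Z-\dim X$, which is absurd, so $W(F,Z)=\emptyset$. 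The same dimension estimate shows that transversality of $F_a$ to $Z$ at a point of $\img F_a\cap Z$ would force $\dim X\ge\codim Z$, contradicting $\delta^*(F,Z)<0$; hence $\img F_a\cap Z=\emptyset$ for every $a\in\pi_2(U)-\Sigma(F,Z)$.

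It remains to prove $(\alpha)\Rightarrow(\beta)$ when $\delta^*(F,Z)\ge0$ and the Hausdorff-measure assertion when $\delta^*(F,Z)<0$. Put $c=\codim Z$ and cover $U$ by countably many product charts $U'=V_X\times V_A$ on which $Y$ carries a chart sending $Z$ to $\{y_1=\dots=y_c=0\}$; write $\phi=(F_1,\dots,F_c)\colon U'\to\R^c$. Then at points with $\phi(x,a)=0$ one has $\delta(F,(x,a),Z)=c-\rank d\phi_{(x,a)}$ and $\delta(F_a,x,Z)=c-\rank d(\phi_a)_x$, and the part of $\Sigma(F,Z)$ coming from $U'$ equals $\bigcup_{i=0}^{c-1}\Sigma_i$ with $\Sigma_i=\{a\in V_A:\exists\,x\in V_X,\ \phi(x,a)=0,\ \rank d(\phi_a)_x=i\}$. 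Since $\Sigma(F,Z)$ is the union of these sets over the countably many charts and the finitely many indices $i$, by countable subadditivity of $\mathcal H^s$ it suffices to make each $\Sigma_i$ $\mathcal H^s$-null, and I would do this by induction on $c$ (the case $c=0$ being vacuous, as then $\Sigma(F,Z)=\emptyset$).

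For $1\le i\le c-1$: near a point of the defining set of $\Sigma_i$ choose $i$ linear combinations $\phi'$ of $\phi_1,\dots,\phi_c$ whose $x$-partial derivative has rank $i$ there, shrink $U'$ so this persists, and solve $\phi'=0$ for $i$ of the $x$-coordinates by the implicit function theorem; restricting the complementary $c-i$ combinations $\phi''$ to the resulting graph yields a $C^r$ map $\psi\colon V_X'\times V_A\to\R^{c-i}$ with target submanifold $Z'=\{0\}$ and $\dim V_X'=\dim X-i$. A Schur-complement computation gives $\rank d\psi=\rank d\phi-i$ and $\rank d(\psi_a)=\rank d(\phi_a)-i$ along the graph, so $\delta(\psi,\cdot,Z')$ and $\delta(\psi_a,\cdot,Z')$ coincide with $\delta(F,\cdot,Z)$ and $\delta(F_a,\cdot,Z)$ at corresponding points; consequently $\pi_2(W(\psi,Z'))\subseteq\pi_2(W(F,Z))$, $\delta^*(\psi,Z')\le\delta^*(F,Z)$ (with $\dim A$ unchanged), and the relevant part of $\Sigma_i$ is contained in $\Sigma(\psi,Z')$. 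Since $\codim Z'=c-i<c$, the inductive hypothesis applies to $(\psi,Z')$ — its version of hypothesis $(\alpha)$ holds because $\pi_2(W(\psi,Z'))\subseteq\pi_2(W(F,Z))$, and the threshold on $s$ is no larger because $\delta^*$ did not increase — giving $\mathcal H^s(\Sigma_i)=0$.

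The case $i=0$ is where hypothesis $(\alpha)$ (respectively the vanishing of $W(F,Z)$) is genuinely used. Split $\Sigma_0=\bigl(\Sigma_0\cap\pi_2(W(F,Z))\bigr)\cup\bigl(\Sigma_0-\pi_2(W(F,Z))\bigr)$; the first set is $\mathcal H^s$-null by $(\alpha)$, and is empty when $\delta^*(F,Z)<0$. At a point contributing to the second set one has $\phi=0$, $d(\phi_a)_x=0$ and $d\phi_{(x,a)}\ne0$; stratifying by $\rank\partial_a\phi=l\ge1$ and noting that $\delta(F,(x,a),Z)=c-l$ forces $l\ge c-\delta(F,Z)$, I would solve $l$ suitable combinations of $\phi$ for $l$ of the $a$-coordinates to obtain a $C^r$ map $\Phi\colon\R^{\dim X+\dim A-l}\to\R^{\dim A}$ onto (a chart of) the parameter space. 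Because $\partial_x\phi=0$ at these points the corresponding implicit solution has vanishing $x$-derivative, so the points in question lie in $\{\rank d\Phi\le\dim A-l\}$ and the relevant part of $\Sigma_0$ lies in the image of this low-rank set; the Morse--Sard--Federer theorem then gives $\mathcal H^{(\dim A-l)+\dim X/r}\bigl(\Phi(\{\rank d\Phi\le\dim A-l\})\bigr)=0$, and the inequalities $(\dim A-l)+\dim X/r\le\dim A-1+(\delta^*(F,Z)+1)/r$ when $\delta^*(F,Z)\ge0$, respectively $\le\dim A+\delta^*(F,Z)$ when $\delta^*(F,Z)<0$ — which both reduce, via $l\ge c-\delta(F,Z)$, $\delta^*(F,Z)=\dim X-c+\delta(F,Z)$ and $r\ge1$, to elementary arithmetic — show the prescribed $\mathcal H^s$ vanishes too. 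Summing over the countably many strata and charts finishes the argument. The main obstacle is the bookkeeping: verifying that the reduction solving for $x$-coordinates preserves the $\delta$-invariants and the set $W$ (so that hypothesis $(\alpha)$ is inherited) and does not increase $\delta^*$, and checking in every sub-case — in particular the boundary cases $r=1$ and $\delta(F,Z)=\codim Z-1$, where the exponents hit the bound in the theorem exactly — that the exponents produced by the reconstruction maps are $\le s$; this is precisely why the threshold for $s$ is $\dim A-1+(\delta^*(F,Z)+1)/r$. One should also use the measure-zero (not merely the dimension) form of the Morse--Sard--Federer theorem, so that $\mathcal H^t=0$ with $t\le s$ yields $\mathcal H^s=0$.
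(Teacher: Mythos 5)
Your proposal is correct, but it takes a genuinely different route from the paper. The paper first records the identity $\Sigma(F,Z)=\pi_2(W(F,Z))\cup\pi_2(\wt{W}(F,Z))$ (\cref{thm:set}) and then proves unconditionally that $\pi_2(\wt{W}(F,Z))$ is $\mathcal{H}^s$-null (\cref{thm:mainsub}): it decomposes $\wt{W}(F,Z)$ according to the value $\rho=\delta(F,(x,a),Z)$ and invokes a lemma imported from \cite{Ichiki2019} (\cref{thm:vital1}) which locally replaces $Z$ by a submanifold $\wt{Z}$ of dimension $\dim Z+\rho$ to which $F$ \emph{is} transverse; the bad parameters then lie among the critical values of $\pi_2$ restricted to the $C^r$ submanifold $F^{-1}(\wt{Z})$, and the critical-value form of Federer's theorem (\cref{thm:Federer}) yields exactly the exponent \cref{eq:main}. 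You instead work in adapted coordinates, stratify by the rank $i$ of $d(\phi_a)_x$, run an induction on $\codim Z$ via implicit-function-theorem reductions for $1\le i\le c-1$, and treat the base stratum $i=0$ by solving for parameter coordinates and applying the rank-stratified form of Federer's theorem to the reconstruction map $\Phi$ into $A$. Both arguments rest on the same mechanism — off $W(F,Z)$ the strict gap $\delta(F_a,x,Z)>\delta(F,(x,a),Z)$ forces transversal directions pointing into the $A$-factor, which Federer's theorem converts into a loss of Hausdorff dimension — and your coordinate reductions are essentially an unpacked, self-contained version of \cref{thm:vital1}; what your route costs is heavier bookkeeping (the Schur-complement rank identities and the verification that $\delta$, $W$ and $\delta^*$ are inherited by the reduced problems) and the full rank-stratified statement of Federer 3.4.3 rather than the critical-value special case quoted in \cref{thm:Federer}. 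One point to tighten: in the exponent check for the $i=0$ stratum, your bound $l\ge\codim Z-\delta(F,Z)$ degenerates to the vacuous $l\ge 0$ in the admissible boundary case $\delta(F,Z)=\codim Z$ (which does occur, e.g.\ in \cref{ex:1}), and there the inequality $(\delta^*(F,Z)+1)(1-1/r)\le\dim X\,(1-1/r)$ fails for $r\ge 2$; you must instead use $l\ge 1$, which you do have available, after which $(\dim A-1)+\dim X/r<\dim A-1+(\delta^*(F,Z)+1)/r$ closes that case. With that adjustment the arithmetic verifies in all sub-cases and the argument is complete.
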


\section{Proof of the main theorem}\label{sec:appmain1}
\red{
Let $\Gamma:X\times \mathcal{L}(\R^{m},\R^{\ell}) \to J^1(X,\R^\ell)$ be the $C^{r-1}$ mapping defined by 
\begin{align*}
\Gamma(q,\pi)=j^1((g+\pi) \circ f)(q). 
\end{align*}
The strategy of this proof is to apply \cref{thm:main} as $F=\Gamma$ and $Z=S^k(X,\R^\ell)$.
First, by the same method as in the proofs of \cite[Theorem~1]{Ichiki2018C} and \cref{thm:appmain1_t}, we can obtain $\delta(\Gamma, S^k(X,\R^\ell))=0$.}
\red{Thus, we have 
\begin{align*}
\delta^*(\Gamma, S^k(X,\R^\ell))=\dim X-\codim S^k(X,\R^\ell). 
\end{align*}
Then, note that $\Sigma_k=\Sigma(\Gamma, S^k(X,\R^\ell))$. }

Next, we will show \cref{thm:appmain1}~\cref{thm:appmain1sub1}. 
Since $\dim X-\codim S^k(X,\R^\ell)\geq 0$, we obtain $\delta^*(\Gamma, S^k(X,\R^\ell))\geq 0$. 
Notice that $\Gamma$ is of class $C^{r-1}$ $(r\geq 2)$ and we have 
\begin{align*}
s&\geq m\ell-1+\frac{\dim X-\codim S^k(X,\R^\ell)+1}{r-1}
=m\ell-1+\frac{\delta^*(\Gamma, S^k(X,\R^\ell))+1}{r-1}. 
\end{align*}
Since $\delta(\Gamma, S^k(X,\R^\ell))=0$, we obtain $W(\Gamma, S^k(X,\R^\ell))=\varnothing$.
Therefore, the set $\pi_2(W(\Gamma, S^k(X,\R^\ell)))$ has $s$-dimensional Hausdorff measure zero in $\mathcal{L}(\R^{m},\R^{\ell})$. 
Thus, by  \cref{thm:main}~\cref{thm:main1}, the set $\Sigma(\Gamma, S^k(X,\R^\ell))$ has $s$-dimensional Hausdorff measure zero in $\mathcal{L}(\R^{m},\R^{\ell})$.
Since $\Sigma_k=\Sigma(\Gamma, S^k(X,\R^\ell))$, we have \cref{thm:appmain1}~\cref{thm:appmain1sub1}.


Finally, we will show \cref{thm:appmain1}~$\cref{thm:appmain1sub2}$. 
Since $\dim X-\codim S^k(X,\R^\ell)<0$, we obtain $\delta^*(\Gamma, S^k(X,\R^\ell))<0$. 
Since $\Gamma$ is of class $C^{r-1}$ $(r\geq 2)$ and we have 
\begin{align*}
s>m\ell+\dim X-\codim S^k(X,\R^\ell)=m\ell+\delta^*(\Gamma, S^k(X,\R^\ell)),
\end{align*}
the set $\Sigma(\Gamma, S^k(X,\R^\ell))$ has $s$-dimensional Hausdorff measure zero in $\mathcal{L}(\R^{m},\R^{\ell})$ by \cref{thm:main}~\cref{thm:main2_h}.
By \cref{thm:main}~\cref{thm:main2_e}, for any $\pi\in \mathcal{L}(\R^{m},\R^{\ell})\setminus\Sigma(\Gamma, S^k(X,\R^\ell))$, we have $\img \Gamma_\pi \cap S^k(X,\R^\ell)=\varnothing$.
Since $\Sigma_k=\Sigma(\Gamma, S^k(X,\R^\ell))$, we obtain \cref{thm:appmain1}~\cref{thm:appmain1sub2}.
\QED


\section{Applications of the main theorem}\label{sec:appfurther}
In this section, we give applications and their examples of the main theorem in the two cases $\ell=1$ and $\ell\geq 2\dim X$.

First, we consider the case $\ell=1$.
Let $X$ be a $C^r$ manifold $(r\geq 1)$, and let $f : X\to \R$ be a $C^1$ mapping.
A point $x\in X$ is called a \emph{critical point} of $f$ if $\rank df_x=0$.
We say that a point of $\R$ is a \emph{critical value} if it is the image of a critical point.
A $C^r$ function $f:X\to \R$ $(r\geq 2)$ is called a \emph{Morse function} if all of the critical points of $f$ are nondegenerate, where $X$ is a $C^r$ manifold.
For details on Morse functions, see for example, \cite[p.~63]{Golubitsky1973}. 
In \cite{Ichiki2018T}, the following result is obtained as an application of \cref{thm:appmain1_t}.
\begin{proposition}[\cite{Ichiki2018T}]\label{thm:Morse2}
Let $f$ be a $C^r$ immersion of a $C^r$ manifold $X$ into an open subset $V$ of $\R^m$, and $g:V\to \R$ a $C^r$ function, where $r$ is an integer satisfying $r\geq 2$. 
Then, the set 
\begin{align*}
\Sigma=\set{\pi\in \mathcal{L}(\R^m,\R)|\mbox{$(g+\pi) \circ f:X\to \R$ is not a Morse function}}
\end{align*}
has Lebesgue measure zero in $\mathcal{L}(\R^m,\R)$.
\end{proposition}
As a side note, Corollary~1 of \cite{Ichiki2018C} is \cref{thm:Morse2} in the case where all manifolds and mappings are of class $C^\infty$.
Namely, \cref{thm:Morse2} is an improvement of Corollary~1 of \cite{Ichiki2018C}.
In this paper, by using the main theorem, we further upgrade \cref{thm:Morse2} from a new perspective of Hausdorff measures as follows:
\begin{theorem}\label{thm:Morse}
Let $f$ be a $C^r$ immersion of a $C^r$ manifold $X$ into an open subset $V$ of $\R^m$, and $g:V\to \R$ a $C^r$ function, where $r$ is an integer satisfying $r\geq 2$. 
Set
\begin{align*}
\Sigma=\set{\pi\in \mathcal{L}(\R^m,\R)|\mbox{$(g+\pi) \circ f:X\to \R$ is not a Morse function}}.
\end{align*}
Then, for any real number $s$ satisfying 
\begin{align}\label{eq:Morse}
s\geq m-1+\frac{1}{r-1}, 
\end{align}
the set $\Sigma$ has $s$-dimensional Hausdorff measure zero in $\mathcal{L}(\R^m,\R)$.
\end{theorem}
\begin{remark}
{\rm
In \cref{thm:Morse}, if all manifolds and mappings are of class $C^\infty$, then we can replace \cref{eq:Morse} by $s>m-1$ by the same argument as in \cref{rem:thm:appmain1_rem}~\cref{rem:thm:appmain1_rem_infty} .
}
\end{remark}
\begin{proof}[Proof of \cref{thm:Morse}]
It is clearly seen that $\Sigma$ is the set consisting of all elements $\pi\in \mathcal{L}(\R^m,\R)$ satisfying that $j^1((g+\pi)\circ f)$ is not transverse to $S^1(X,\R)$.
Since $\dim X-\codim S^1(X,\R)=0$, by \cref{thm:appmain1}~\cref{thm:appmain1sub1}, for any real number $s$ satisfying \cref{eq:Morse}, the set $\Sigma$ has $s$-dimensional Hausdorff measure zero in $\mathcal{L}(\R^m,\R)$.
\end{proof}
As in the following example (\cref{ex:Morse}), there exists an example such that \cref{eq:Morse} in \cref{thm:Morse} cannot be improved.
Namely, \cref{eq:Morse} is the best evaluation in general.
In \cref{ex:Morse}, we also explain an advantage of \cref{thm:Morse} from a new perspective of Hausdorff measures compared to \cref{thm:Morse2} from the viewpoint of Lebesgue measures.

\begin{example}[An example of \cref{thm:Morse}]\label{ex:Morse}{\rm  
Set $X=V=\R$ and $f(x)=x$ in \cref{thm:Morse}.
Let $g:\R\to \R$ be the $C^r$ function defined by $g(x)=-\int_{0}^{x}\eta(x)dx$, where $r$ is an integer satisfying $r\geq 2$ and $\eta:\R\to \R$ is a $C^{r-1}$ function such that the Hausdorff dimension of the set consisting of all critical values of $\eta$ is $\frac{1}{r-1}$.
Note that the existence of such a function is guaranteed  in \cite[Example~4.2]{Moreira2001}\footnote{In \cite[Example~4.2]{Moreira2001}, in general, there exists a $C^r$ mapping $\wt{\eta}:\R^m\to \R^m$ such that the Hausdorff dimension of the set consisting of all critical values of $\wt{\eta}$ (i.e. the set of all $\wt{\eta}(x)\in\R^m$ such that $x\in \R^m$ satisfies $\rank d\wt{\eta}_x<m$) is equal to $m-1+\frac{1}{r}$, where $r$ is a positive integer.}.
Set 
\begin{align*}
\Sigma=\set{a\in \R|\mbox{$g+\pi:\R\to \R$ is not a Morse function, where $\pi(x)=ax$}}.
\end{align*}
By noting that a linear mapping $\pi\in \mathcal{L}(\R,\R)$ can be expressed by $\pi(x)=ax$ $(a\in \R)$ and thus that it can be identified with $a\in \R$, for any real number $s$ satisfying
\begin{align}\label{eq:eta}
s\geq \frac{1}{r-1},
\end{align}
the set $\Sigma$ has $s$-dimensional Hausdorff measure zero in $\R$ by \cref{thm:Morse}.
Now, we will show that \cref{eq:eta} cannot be improved.
Let $\cv(\eta)$ be the set consisting of all critical values of $\eta$.
Since $a\in \Sigma$ if and only if there exists $x\in \R$ such that
$\frac{d(g+\pi)}{dx}(x)=0$ and $\frac{d^2(g+\pi)}{dx^2}(x)=0$ (i.e. $a=\eta(x)$ and $\frac{d\eta}{dx}(x)=0$) if and only if $a\in \cv(\eta)$, we obtain $\Sigma=\cv(\eta)$, which implies that $\HD_\R(\Sigma)=\frac{1}{r-1}$.
Namely, we cannot improve the assumption \cref{eq:eta}, which implies that \cref{eq:Morse} is the best evaluation in general.

Now, by using this example, we simply explain an advantage of \cref{thm:Morse} compared to \cref{thm:Morse2}.
By the above argument, in the case $r\geq 3$, we have
\begin{align*}
\HD_{\R}(\Sigma) <\HD_{\R}(K)=\frac{\log 2}{\log 3}=0.63\cdots,
\end{align*}
where $K$ is the Cantor set in $\R$.
Thus, in the case $r\geq 3$. the set $\Sigma$ is never equal to $K$.

On the other hand, in the case $r=2$, there exists an example such that the bad set is equal to $K$ as follows.
Let $h:\R\to \R$ be the $C^2$ function defined by $h(x)=-\int_{0}^{x}\xi(x)dx$, where $\xi:\R\to \R$ is a $C^1$ function such that the set consisting of all critical values of $\xi$ is the Cantor set $K$.
Note that the existence of the mapping $\xi$ can be easily shown from \cite[Proposition~2 (p.~1485)]{Norton1991}\footnote{Proposition~2 of \cite{Norton1991} is as follows: If $\wt{K}$ is a compact subset of the closed interval [0,1], then $\wt{K}$ has Lebesgue measure zero if and only if the set consisting of all critical values of $\wt{\xi}$ is equal to $\wt{K}$ for some $C^1$ function $\wt{\xi}:\R\to\R$.
Since the Cantor set $K$ is a compact subset of $[0,1]$ with Lebesgue measure zero, we can guarantee the existence of the above function $\xi:\R\to \R$.}.
Set 
\begin{align*}
\Sigma'=\set{a\in \R|\mbox{$h+\pi:\R\to \R$ is not a Morse function, where $\pi(x)=ax$}}.
\end{align*}
Since we have $a\in \Sigma'$ if and only if there exists $x\in \R$ such that
$\frac{d(h+\pi)}{dx}(x)=0$ and $\frac{d^2(h+\pi)}{dx^2}(x)=0$ (i.e. $a=\xi(x)$ and $\frac{d\xi}{dx}(x)=0$), which is equivalent to that $a\in K$, we obtain $\Sigma'=K$. 

Since any subset of $\R$ whose Hausdorff dimension is strictly smaller than $1$ has Lebesgue measure zero in $\R$, we cannot investigate whether the bad set is equal to $K$ or not by \cref{thm:Morse2}.
On the other hand, as in the case $r\geq 3$ of this example, we can see that the bad set $\Sigma$ is never equal to $K$ by \cref{thm:Morse}.}
\end{example}
\red{Here, we give a remark on the assumption \cref{eq:appmain1} of \cref{thm:appmain1}. 
\begin{remark}\label{rem:main_evaluation_1}
{\rm  
    In \cref{thm:appmain1}, let $X=V=\R$, and let $f$ and $g$ be the functions defined in \cref{ex:Morse}.
    Then, $\Sigma_1$ in \cref{thm:appmain1} is expressed as follows:
\begin{align*}
\Sigma_1=\set{\pi\in \mathcal{L}(\R,\R)|\mbox{$j^1(g+\pi)$ is not transverse to $S^1(\R,\R)$}}.
\end{align*}
    Since $\dim \R-\codim S^1(\R,\R)=0$, for any real number $s$ satisfying $s\geq \frac{1}{r-1}$, 
the set $\Sigma_1$ has $s$-dimensional Hausdorff measure zero in $\mathcal{L}(\R,\R)$ by \cref{thm:appmain1}~\cref{thm:appmain1sub1}.
Since $\Sigma_1$ can be identified with the set $\Sigma$ in \cref{ex:Morse}, we have $\HD_{\mathcal{L}(\R,\R)}(\Sigma_1)=\frac{1}{r-1}$.
Thus, the assumption $s\geq \frac{1}{r-1}$ cannot be improved, which implies that \cref{eq:appmain1} is the best evaluation in general.}
\end{remark}
}
Next, we consider the case $\ell \geq2\dim X$.
In \cite{Ichiki2018T}, the following result is also obtained as an application of \cref{thm:appmain1_t}.
\begin{proposition}[\cite{Ichiki2018T}]\label{thm:immersion2}
Let $f$ be a $C^r$ immersion of an $n$-dimensional $C^r$ manifold $X$ into an open subset $V$ of $\R^{m}$, and $g:V\to \R^{\ell}$ a $C^r$ mapping, where $\ell \geq2n$ and $r\geq 2$. 
Then, the following set 
\begin{align*}
\Sigma=\set{\pi\in \mathcal{L}(\R^m,\R^\ell)|\mbox{$(g+\pi) \circ f:X\to \R^\ell$ is not an immersion}}
\end{align*}
has Lebesgue measure zero in $\mathcal{L}(\R^m,\R^\ell)$.
\end{proposition}
As a side note, Corollary~3 of \cite{Ichiki2018C} is \cref{thm:immersion2} in the case where all manifolds and mappings are of class $C^\infty$.
Namely, \cref{thm:immersion2} is an improvement of Corollary~3 of \cite{Ichiki2018C}.
In this paper, by using the main theorem, we further upgrade \cref{thm:immersion2} from a new perspective of Hausdorff measures as follows:
\begin{theorem}\label{thm:immersion}
Let $f$ be a $C^r$ immersion of an $n$-dimensional $C^r$ manifold $X$ into an open subset $V$ of $\R^{m}$, and $g:V\to \R^{\ell}$ a $C^r$ mapping, where $\ell \geq2n$ and $r\geq 2$. 
Set
\begin{align*}
\Sigma=\set{\pi\in \mathcal{L}(\R^m,\R^\ell)|\mbox{$(g+\pi) \circ f:X\to \R^\ell$ is not an immersion}}.
\end{align*}
Then, for any real number $s$ satisfying 
\begin{align}\label{eq:immersion}
s>m\ell+(2n-\ell-1), 
\end{align}
the set $\Sigma$ has $s$-dimensional Hausdorff measure zero in $\mathcal{L}(\R^m,\R^\ell)$.
\end{theorem}
\begin{proof}[Proof of \cref{thm:immersion}]
Let $k$ be an integer satisfying $1\leq k \leq n$.
As in \cref{thm:appmain1}, set 
\begin{align*}
\Sigma_k=\set{\pi\in \mathcal{L}(\R^m,\R^\ell)|\mbox{$j^1((g+\pi)\circ f)$ is not transverse to $S^k(X,\R^\ell)$}}.
\end{align*}
Since $\ell\geq 2n$, we have 
\begin{align*}
\dim X-\codim S^k(X,\R^\ell) 
\leq \dim X-\codim S^1(X,\R^\ell) 
=2n-\ell-1< 0. 
\end{align*}
Hence, since 
\begin{align*}
s>m\ell+(2n-\ell-1)\geq m\ell+(\dim X-\codim S^k(X,\R^\ell)),  
\end{align*}
by \cref{thm:appmain1}~\cref{thm:appmain1sub2}, we have the following:
\begin{enumerate}[(a)]
    \item 
    The set $\Sigma_k$ has $s$-dimensional Hausdorff measure zero in $\mathcal{L}(\R^{m},\R^\ell)$.
    \item
    For any $\pi\in \mathcal{L}(\R^{m},\R^\ell)\setminus\Sigma_k$, we have 
    $j^1((g+\pi)\circ f)(X)\cap S^k(X,\R^\ell)=\varnothing$.
\end{enumerate}

By (b), we can easily obtain $\Sigma=\bigcup_{k=1}^n\Sigma_k$.
By (a), the set $\Sigma$ has $s$-dimensional Hausdorff measure zero in $\mathcal{L}(\R^{m},\R^\ell)$.
\end{proof}
As in the following example (\cref{ex:immersion}), there exists an example such that \cref{eq:immersion} in \cref{thm:immersion} cannot be improved.
Namely, \cref{eq:immersion} is the best evaluation in general.
In \cref{ex:immersion}, we also explain an advantage of \cref{thm:immersion} from a new perspective of Hausdorff measures compared to \cref{thm:immersion2} from the viewpoint of Lebesgue measures.
\begin{example}[An example of \cref{thm:immersion}]\label{ex:immersion}{\rm 
Set $X=V=\R$ and $f(x)=x$ in \cref{thm:immersion}.
Let $g:\R\to \R^\ell$ $(\ell\geq 2)$ be the $C^\infty$ mapping defined by $g(x)=(x^2\ld x^2)$.
As in \cref{thm:immersion}, set 
\begin{align*}
\Sigma=\set{\pi\in \mathcal{L}(\R,\R^\ell)|\mbox{$g+\pi:\R\to \R^\ell$ is not an immersion}}.
\end{align*}
Then, for any real number $s$ satisfying $s>1$, the set $\Sigma$ has $s$-dimensional Hausdorff measure zero in $\mathcal{L}(\R,\R^\ell)$ by \cref{thm:immersion}.
On the other hand, by the following direct calculation, we obtain $\Sigma=B$, where 
\begin{align*}
    B=\set{\pi=(\pi_1\ld \pi_\ell)\in\mathcal{L}(\R,\R^\ell)|\pi_1=\cdots=\pi_\ell}.
\end{align*}
Since $B$ does not have $1$-dimensional Hausdorff measure zero in $\mathcal{L}(\R,\R^\ell)$, we cannot improve the assumption $s>1$, which means that in general,  \cref{eq:immersion} cannot be improved.


Now, we prove $\Sigma=B$.
First, we show $\Sigma\subset B$.
Let $\pi=(\pi_1\ld \pi_\ell)\in \Sigma$ be an arbitrary element.
Set $\pi_i(x)=a_ix$ ($a_i\in\R$) for $i=1\ld \ell$.
Then, there exists $\wt{x}\in \R$ such that $2\wt{x}+a_i=0$ for all $i=1\ld \ell$.
Since $a_1=\cdots =a_\ell$, we have $\pi\in B$.

Next, we show $B\subset \Sigma$.
Let $\pi\in B$ be an arbitrary element.
Then, we can express $\pi_i(x)=ax$ ($a\in\R$) for all $i=1\ld \ell$.
Set $\wt{x}=-\frac{a}{2}$.
Since $d(g+\pi)_{\wt{x}}=0$, we obtain $\pi\in \Sigma$.

Now, by using this example, we explain an advantage of \cref{thm:immersion} compared to \cref{thm:immersion2}.
Since any subset of $\mathcal{L}(\R,\R^\ell)$ whose Hausdorff dimension is strictly smaller than $\ell$ has Lebesgue measure zero in $\mathcal{L}(\R,\R^\ell)$, we cannot estimate the Hausdorff dimension of the bad set $\Sigma$ by \cref{thm:immersion2}.
On the other hand, by \cref{thm:immersion}, we have an estimate of the Hausdorff dimension of the bad set, such as $\HD_{\mathcal{L}(\R,\R^\ell)}(\Sigma)\leq 1$.
For example, we consider the case of $\ell\geq 3$.
Since a ``surface" such as a 2-dimensional sphere has Lebesgue measure zero in $\mathcal{L}(\R,\R^\ell)$, we cannot exclude the possibility that the bad set is such a 2-dimensional set by \cref{thm:immersion2}.
On the other hand, by using \cref{thm:immersion}, we can conclude that the bad set is never equal to a ``surface" such as a 2-dimensional sphere, since $\HD_{\mathcal{L}(\R,\R^\ell)}(\Sigma)\leq 1$.}
\end{example}
\red{Here, we give a remark on the assumption \cref{eq:appmain1_second} of \cref{thm:appmain1}. 
\begin{remark}\label{rem:main_evaluation_2}
{\rm  
    In \cref{thm:appmain1}, let $X=V=\R$ and let $f$ and $g:\R\to \R^\ell$ $(\ell \geq 2)$ be the mappings defined in \cref{ex:immersion}.
    Then, $\Sigma_1$ in \cref{thm:appmain1} is expressed as follows:
\begin{align*}
\Sigma_1=\set{\pi\in \mathcal{L}(\R,\R^\ell)|\mbox{$j^1(g+\pi)$ is not transverse to $S^1(\R,\R^\ell)$}}.
\end{align*}
Since $\dim \R-\codim S^1(\R,\R^\ell)=1-\ell<0$, for any real number $s$ satisfying $s>1$, the set $\Sigma_1$ has $s$-dimensional Hausdorff measure zero in $\mathcal{L}(\R,\R^\ell)$ by \cref{thm:appmain1}~\cref{thm:appmain1sub2_h}.
Since $\Sigma_1$ is equal to the set $\Sigma$ in \cref{ex:immersion}, we cannot improve the assumption $s>1$, which implies that \cref{eq:appmain1_second} is the best evaluation in general.}
\end{remark}
}

\section{An application of the main theorem to multiobjective optimization}\label{sec:appmulti}
The purpose of this section is to give an application of the main theorem to multiobjective optimization from the viewpoint of differential topology and singularity theory  (see \cref{thm:simplicial_generic_2}) .
For a positive integer $\ell$, set 
\begin{align*}
    L=\set{1\ld \ell}.
\end{align*}

We consider the problem of optimizing several functions simultaneously.
More precisely, let $f: X \to \R^\ell$ be a mapping, where $X$ is a given arbitrary set.
A point $x \in X$ is called a \emph{Pareto solution} of $f$ if there does not exist another point $y \in X$ such that $f_i(y) \leq f_i(x)$ for all $i \in L$ and $f_j(y) < f_j(x)$ for at least one index $j \in L$.
We denote the set consisting of all Pareto solutions of $f$ by $X^*(f)$, which is called the \emph{Pareto set} of $f$.
The set $f(X^*(f))$ is called the \emph{Pareto front} of $f$.
The problem of determining $X^*(f)$ is called the \emph{problem of minimizing $f$}.

Let $f = (f_1\ld f_\ell): X \to \R^\ell$ be a mapping, where $X$ is a given arbitrary set.
For a non-empty subset $I = \set{i_1\ld i_k}$ of $L$ such that $i_1 < \dots < i_k$, set
\begin{align*}
    f_I = (f_{i_1}\ld f_{i_k}).
\end{align*}
The problem of determining $X^*(f_I)$ is called a \emph{subproblem} of the problem of minimizing $f$.
Set
\begin{align*}
\Delta^{\ell-1}&=\Set{(w_1,\dots,w_\ell) \in \R^\ell | \sum_{i=1}^\ell w_i=1,\ w_i\geq 0}.
\end{align*}
We also denote a face of $\Delta^{\ell-1}$ for a non-empty subset $I$ of $L$ by
\begin{align*}
\Delta_I = \set{(w_1,\dots,w_\ell) \in \Delta^{\ell-1} | w_i = 0\ (i \not \in I)}.
\end{align*}

In this section, for a $C^r$ manifold $N$ (possibly with corners) and a subset $V$ of $\R^\ell$, a mapping $g:N\to V$ is called a \emph{$C^r$ mapping} (resp., a  \emph{$C^r$ diffeomorphism}) if $g:N\to \R^\ell$ is of class $C^r$ (resp., if $g:N\to \R^\ell$ is a $C^r$ immersion and $g:N\to V$ is a homeomorphism), where $r$ is a positive integer or $r=\infty$.
Here, $C^0$ mappings and $C^0$ diffeomorphisms are continuous mappings and homeomorphisms, respectively.
%
\begin{definition}[\cite{Hamada2019,Hamada2019b}]\label{def:simplicial}{\rm
Let $f = (f_1\ld f_\ell): X \to \R^\ell$ be a mapping, where $X$ is a subset of $\R^m$.
Let $r$ be an integer satisfying $r\geq 0$ or $r=\infty$.
The problem of minimizing $f$ is $C^r$ \emph{simplicial} if there exists a $C^r$ mapping $\Phi: \Delta^{\ell-1} \to X^*(f)$ such that both the mappings $\Phi|_{\Delta_I}: \Delta_I \to X^*(f_I)$ and $f|_{X^*(f_I)}: X^*(f_I) \to f(X^*(f_I))$ are $C^r$ diffeomorphisms for any non-empty subset $I$ of $L$.
The problem of minimizing $f$ is $C^r$ \emph{weakly simplicial} if there exists a $C^r$ mapping $\phi:\Delta^{\ell-1}\to X^*(f)$ such that $\phi(\Delta_I) = X^*(f_I)$ for any non-empty subset $I$ of $L$.}
\end{definition}
As described in \cite{Hamada2019}, simpliciality is an important property, which can be seen in several practical problems ranging from the facility location problem studied half a century ago~\cite{Kuhn1967} to sparse modeling actively developed today~\cite{Hamada2019}. If a problem is simplicial, then we can efficiently compute a parametric-surface approximation of the entire Pareto set with few sample points~\cite{Kobayashi2019}.

A subset $X$ of $\R^m$ is \emph{convex} if $t x + (1 - t) y \in X$ for all $x, y \in X$ and all $t \in [0, 1]$.
Let $X$ be a convex set in $\R^m$.
A function $f: X \to \R$ is \emph{strongly convex} if there exists $\alpha > 0$ such that
\begin{align*}
    f(t x + (1 - t) y) \leq t f(x) + (1 - t) f(y) - \frac{1}{2} \alpha t (1 - t) \norm{x - y}^2
\end{align*}
for all $x, y \in X$ and all $t \in [0, 1]$, where $\norm{z}$ is the Euclidean norm of $z \in \R^m$.
The constant $\alpha$ is called a \emph{convexity parameter} of the function $f$.
A mapping $f = (f_1\ld f_\ell): X \to \R^\ell$ is \emph{strongly convex} if $f_i$ is strongly convex for any $i \in L$.

\begin{theorem}[\cite{Hamada2019,Hamada2019b}]\label{thm:main-C1}
Let $f:\R^m \to \R^\ell$ be a strongly convex $C^r$ mapping, where $r$ is a positive integer or $r=\infty$.
Then, the problem of minimizing $f$ is $C^{r-1}$ weakly simplicial.
Moreover, this problem is $C^{r-1}$ simplicial if the rank of the differential $df_x$ is equal to $\ell-1$ for any $x \in X^*(f)$.
\end{theorem}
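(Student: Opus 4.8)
The plan is to exhibit an explicit parametrization of the Pareto set by the simplex and to read off all the regularity in \cref{thm:main-C1} from the implicit function theorem. First I would define $\Phi\colon\Delta^{\ell-1}\to\R^m$ by letting $\Phi(w)$ be the unique global minimizer of the weighted sum $\sum_{i=1}^{\ell}w_if_i$. This is well defined: for $w\in\Delta^{\ell-1}$ at least one $w_i$ is positive, so $\sum_iw_if_i$ is strongly convex (hence coercive and strictly convex), and its unique minimizer is characterized by $\sum_iw_i\nabla f_i(\Phi(w))=0$. Applying the same construction to each subproblem $f_I$ shows that $\Phi|_{\Delta_I}$ coincides with the analogous parametrization attached to $f_I$.

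Next I would prove that $\Phi(\Delta_I)=X^*(f_I)$ for every nonempty $I\subseteq L$. The inclusion $\subseteq$ follows from uniqueness: if $y$ satisfied $f_i(y)\le f_i(\Phi(w))$ for all $i\in I$ with one strict inequality, then $\sum_{i\in I}w_if_i(y)\le\sum_{i\in I}w_if_i(\Phi(w))$, forcing $y=\Phi(w)$, a contradiction. For $\supseteq$, take $x\in X^*(f_I)$; since $f_I(x)$ lies on the boundary of the convex set $f_I(\R^m)+\R_{\ge0}^{|I|}$, a supporting functional there is a nonnegative covector, which after normalization is a weight $w\in\Delta_I$ with $x=\Phi(w)$ (again by uniqueness of the minimizer of $\sum_{i\in I}w_if_i$). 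In particular $\Phi$ maps $\Delta^{\ell-1}$ onto $X^*(f)$. Smoothness then comes from the implicit function theorem applied to $H(x,w)=\sum_iw_i\nabla f_i(x)$: this map is $C^{r-1}$, and $\partial H/\partial x$ is the Hessian of $\sum_iw_if_i$, which is positive definite, hence invertible; since $\Phi$ is the unique global branch it is $C^{r-1}$ on all of $\Delta^{\ell-1}$, including along its faces (extend $H$ to a neighborhood of $\Delta^{\ell-1}$ in its affine hull, where the weighted sums stay strongly convex). Taking $\phi=\Phi$ yields the $C^{r-1}$ weakly simplicial conclusion.

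For the ``moreover'' part, the key lemma is that, under the rank hypothesis, $\rank d(f_I)_x=|I|-1$ for every $x\in X^*(f_I)$ and every nonempty $I$. First note $X^*(f_I)\subseteq X^*(f)$: if $y$ dominated $x\in X^*(f_I)$ for $f$, then restricting to $I$ no strict inequality could occur among those indices, so $f_i(y)=f_i(x)$ for all $i\in I$, whence (writing $x=\Phi(w)$, $w\in\Delta_I$) $y$ also minimizes $\sum_{i\in I}w_if_i$ and $y=x$ by uniqueness, a contradiction. Hence $\rank df_x=\ell-1$ and $\ker(df_x)^{\top}=\R w$; since $\mathrm{supp}(w)\subseteq I$, a covector $v\in\R^I$ satisfies $\sum_{i\in I}v_i\nabla f_i(x)=0$ exactly when its zero-extension lies in $\R w$, so that kernel is one-dimensional and $\rank d(f_I)_x=|I|-1$. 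Granting this: differentiating the first-order condition gives $\mathrm{Hess}_I\cdot d\Phi_wv=-(d(f_I)_x)^{\top}v$, so $d\Phi_wv=0$ forces $v\in\ker(d(f_I)_x)^{\top}=\R w$, whence $v=0$ because $\sum_iv_i=0$; thus $\Phi|_{\Delta_I}$ is an immersion. It is injective because $\Phi(w)=\Phi(w')=x$ places $w,w'$ in the one-dimensional space $\ker(d(f_I)_x)^{\top}$ and on $\{\sum w_i=1\}$. A continuous injective immersion of the compact manifold with corners $\Delta_I$ into $\R^m$ is an embedding, so $\Phi|_{\Delta_I}\colon\Delta_I\to X^*(f_I)$ is a $C^{r-1}$ diffeomorphism and $X^*(f_I)$ is a $C^{r-1}$ manifold with corners of dimension $|I|-1$. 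Finally $f\circ\Phi|_{\Delta_I}$ has differential $df_x\circ d\Phi_w$; pairing with $v$ and using positive definiteness of $\mathrm{Hess}_I^{-1}$ shows that $d(f_I\circ\Phi|_{\Delta_I})_wv=-d(f_I)_x\,\mathrm{Hess}_I^{-1}(d(f_I)_x)^{\top}v$ vanishes only for $v=0$, and since $\ker df_x\subseteq\ker d(f_I)_x$ this makes $f\circ\Phi|_{\Delta_I}$ an immersion as well; it is injective because equal values of $f$ at two Pareto points of $f_I$ again force equality via uniqueness of the weighted-sum minimizer. Hence $f\circ\Phi|_{\Delta_I}$ is an embedding and $f|_{X^*(f_I)}=(f\circ\Phi|_{\Delta_I})\circ(\Phi|_{\Delta_I})^{-1}$ is a $C^{r-1}$ diffeomorphism onto $f(X^*(f_I))$, which verifies every condition in the definition of $C^{r-1}$ simpliciality.

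The main obstacle is precisely this propagation of the rank condition from $X^*(f)$ to all subproblems, together with the bookkeeping needed to keep the implicit function theorem, the immersion computations, and the ``compact injective immersion $\Rightarrow$ embedding'' step valid uniformly up to and along every face $\Delta_I$; passing to the affine hull of $\Delta^{\ell-1}$, where strong convexity of the weighted sums persists, is what makes these steps go through cleanly.
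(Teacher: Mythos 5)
The paper does not actually prove \cref{thm:main-C1}; it is imported verbatim from the cited references, with the remark that the $r\geq 2$ case is \cite[Theorem~1.1]{Hamada2019} and \cite[Theorem~5]{Hamada2019b} and that the $r=1$ case requires the separate argument of \cite[Theorem~2]{Hamada2019b}. Your strategy --- parametrize $X^*(f_I)$ by scalarization $w\mapsto\arg\min\sum_{i}w_if_i$, identify $\Phi(\Delta_I)=X^*(f_I)$ via uniqueness of minimizers in one direction and a supporting hyperplane of the convex set $f_I(\R^m)+\R_{\geq0}^{|I|}$ in the other, get regularity from the implicit function theorem applied to $H(x,w)=\sum_iw_i\nabla f_i(x)$, and propagate the rank hypothesis to subproblems through the one-dimensionality of $\ker (df_x)^{\top}$ --- is exactly the standard route taken in those references, and for $r\geq2$ your verification of the immersion, injectivity, and ``compact injective immersion $\Rightarrow$ embedding'' steps is sound.

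Two points need repair. First, your parenthetical justification for applying the implicit function theorem up to the faces --- ``extend $H$ to a neighborhood of $\Delta^{\ell-1}$ in its affine hull, where the weighted sums stay strongly convex'' --- is false as stated: once some $w_i<0$ the combination $\sum_iw_if_i$ need not be convex at all (there is no upper bound on the Hessians of the $f_i$, so a small negative weight on a function with large curvature destroys positive definiteness). The step itself survives without this claim: $H$ is defined and $C^{r-1}$ for all $(x,w)\in\R^m\times\R^\ell$, $\partial H/\partial x=\sum_iw_i\,\mathrm{Hess}f_i(x)$ is positive definite at every $w\in\Delta^{\ell-1}$, and the local IFT branch must coincide with $\Phi$ on the simplex because for $w\in\Delta^{\ell-1}$ the equation $H(\cdot,w)=0$ has a globally unique solution; but the argument should be phrased this way rather than via persistence of convexity. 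Second, and more substantively, your proof silently assumes $r\geq2$: for $r=1$ the map $H$ is only continuous in $x$, the Hessians do not exist, and the implicit function theorem gives nothing, yet the theorem still asserts $C^{0}$ (weak) simpliciality. That case needs a separate continuity argument for $w\mapsto\Phi(w)$, e.g.\ from the quadratic growth inequality $\sum_iw_if_i(y)\geq\sum_iw_if_i(\Phi(w))+\tfrac{\alpha}{2}\norm{y-\Phi(w)}^2$, which yields both boundedness of minimizers and convergence $\Phi(w_n)\to\Phi(w)$ as $w_n\to w$; the rest of your $C^0$ argument (injectivity from the rank hypothesis, compactness) then goes through. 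As written, the proposal proves the theorem only for $r\geq2$.
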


Moreover, in \cite{Hamada2019}, the following result is obtained as an application of \cref{thm:appmain1_t}.

\begin{proposition}[\cite{Hamada2019}]\label{thm:simplicial_generic}
Let $f:\R^m\to\R^\ell$ $(m\geq \ell)$ be a strongly convex $C^r$ mapping, where $r$ is an integer satisfying $r\geq 2$ or $r=\infty$.
Set 
\begin{align*}
\Sigma=\set{\pi\in \mathcal{L}(\R^m,\R^\ell)|\mbox{The problem of minimizing $f+\pi$ is not $C^{r-1}$ simplicial}}.
\end{align*}
If $m-2\ell+4>0$, then $\Sigma$ has Lebesgue measure zero in $\mathcal{L}(\R^m, \R^\ell)$.
\end{proposition}
By using the main theorem, we can also upgrade \cref{thm:simplicial_generic} as follows:
\begin{theorem}\label{thm:simplicial_generic_2}
Let $f:\R^m\to\R^\ell$ $(m\geq \ell)$ be a strongly convex $C^r$ mapping, where $r$ is an integer satisfying $r\geq 2$ or $r=\infty$.
Set 
\begin{align*}
\Sigma=\set{\pi\in \mathcal{L}(\R^m,\R^\ell)|\mbox{The problem of minimizing $f+\pi$ is not $C^{r-1}$ simplicial}}.
\end{align*}
If $m-2\ell+4>0$, then for any non-negative real number $s$ satisfying 
\begin{align}\label{eq:generic_s_main}
s>m\ell-(m-2\ell+4),
\end{align}
the set $\Sigma$ has $s$-dimensional Hausdorff measure zero in $\mathcal{L}(\R^m,\R^\ell)$. 
\end{theorem}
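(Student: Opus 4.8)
The plan is to deduce the theorem from \cref{thm:appmain1}, applied with the identity as the immersion, together with the structural input \cref{thm:main-C1}. We may assume $\ell\geq2$ and that $r$ is a finite integer: if $\ell=1$, then a strongly convex function $f+\pi:\R^m\to\R$ has a unique minimizer, so $X^*(f+\pi)$ and $\Delta^{0}$ are one-point sets and the problem of minimizing $f+\pi$ is trivially $C^{r-1}$ simplicial, whence $\Sigma=\emptyset$; and the case $r=\infty$ follows by running the argument below with any finite integer $r'\geq2$, since the Hausdorff threshold produced does not depend on $r'$. Now apply \cref{thm:appmain1} with $X=V=\R^m$, with the immersion (called $f$ there) equal to $\id_{\R^m}$, and with the mapping $g$ there equal to our strongly convex mapping $f$, so that the composite of \cref{thm:appmain1} is $f+\pi$; by \cref{thm:preserve_strong} it is again strongly convex of class $C^r$. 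For $1\leq k\leq\min\set{m,\ell}=\ell$ set
\begin{align*}
\Sigma_k=\set{\pi\in\mathcal{L}(\R^m,\R^\ell)\mid j^1(f+\pi)\text{ is not transverse to }S^k(\R^m,\R^\ell)}.
\end{align*}

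Since $m\geq\ell$, one has $v=\min\set{m,\ell}=\ell$ in the setup of \cref{sec:application}, so $\codim S^k(\R^m,\R^\ell)=(m-\ell+k)k$ and (recalling $\dim X=\dim\R^m=m$)
\begin{align*}
\dim X-\codim S^k(\R^m,\R^\ell)=m-(m-\ell+k)k.
\end{align*}
The map $k\mapsto(m-\ell+k)k$ is strictly increasing for $k\geq1$ because $m\geq\ell$, so for every integer $k$ with $2\leq k\leq\ell$,
\begin{align*}
\dim X-\codim S^k(\R^m,\R^\ell)\leq m-2(m-\ell+2)=-(m-2\ell+4)<0,
\end{align*}
using the hypothesis $m-2\ell+4>0$. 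Hence \cref{thm:appmain1}~\cref{thm:appmain1sub2} applies to each such $k$. From \cref{thm:appmain1sub2_e}, if $j^1(f+\pi)$ meets $S^k(\R^m,\R^\ell)$ then $\pi\in\Sigma_k$; and from \cref{thm:appmain1sub2_h}, $\Sigma_k$ has $s$-dimensional Hausdorff measure zero in $\mathcal{L}(\R^m,\R^\ell)$ whenever $s>m\ell+\dim X-\codim S^k(\R^m,\R^\ell)$, which by the previous display holds in particular whenever $s>m\ell-(m-2\ell+4)$.

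It remains to establish the inclusion $\Sigma\subseteq\bigcup_{k=2}^{\ell}\Sigma_k$; granting this, $\Sigma$ is a finite union of sets of $s$-dimensional Hausdorff measure zero for every $s>m\ell-(m-2\ell+4)$, which is the assertion. Fix $\pi\in\Sigma$. By \cref{thm:main-C1}, the problem of minimizing $f+\pi$ would be $C^{r-1}$ simplicial if $\rank d(f+\pi)_x=\ell-1$ held for every $x\in X^*(f+\pi)$; since $\pi\in\Sigma$, there must be a point $x_0\in X^*(f+\pi)$ with $\rank d(f+\pi)_{x_0}\neq\ell-1$. On the other hand, the first-order (Karush--Kuhn--Tucker) optimality condition at a Pareto solution $x$ of $f+\pi$ provides weights $w=(w_1\ld w_\ell)$ with $w_i\geq0$, $\sum_i w_i=1$, and $\sum_i w_i\,d(f_i+\pi_i)_x=0$; this is a nontrivial linear dependence among the rows of the Jacobian matrix, so $\rank d(f+\pi)_x\leq\ell-1$ for every $x\in X^*(f+\pi)$. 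Therefore $\rank d(f+\pi)_{x_0}\leq\ell-2$, i.e.\ $\corank d(f+\pi)_{x_0}=\ell-\rank d(f+\pi)_{x_0}$ is an integer in $\set{2\ld\ell}$, and consequently $j^1(f+\pi)(x_0)\in S^k(\R^m,\R^\ell)$ with $k=\corank d(f+\pi)_{x_0}$. By the previous paragraph $\pi\in\Sigma_k$, which proves the inclusion.

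I expect no essential obstacle: the genuinely nontrivial ingredients are external, namely the characterization of $C^{r-1}$ simpliciality through the rank of the differential on the Pareto set (\cref{thm:main-C1}), the elementary fact that this rank never exceeds $\ell-1$, and the already-established transversality theorem \cref{thm:appmain1}. The care needed lies only in the codimension bookkeeping: checking that $\dim X-\codim S^k(\R^m,\R^\ell)<0$ for all $k\geq2$ --- which is exactly what places us in case \cref{thm:appmain1sub2} rather than \cref{thm:appmain1sub1} --- and that the resulting threshold $m\ell+\dim X-\codim S^k(\R^m,\R^\ell)$ is largest at $k=2$, yielding the uniform bound $m\ell-(m-2\ell+4)$. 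Relative to \cref{thm:simplicial_generic}, the new content is precisely the substitution of the Hausdorff-measure transversality result for the Lebesgue-measure one, which upgrades ``Lebesgue measure zero'' to the quantitative estimate $\HD_{\mathcal{L}(\R^m,\R^\ell)}(\Sigma)\leq m\ell-(m-2\ell+4)$.
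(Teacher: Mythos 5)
Your proof is correct and follows essentially the same route as the paper: the paper isolates the statement that $\set{\pi \mid \exists x,\ \rank d(f+\pi)_x\leq\ell-2}$ has $s$-dimensional Hausdorff measure zero as a separate lemma, proves it exactly as you do by writing this set as $\bigcup_{k=2}^{\ell}\Sigma_k$ and invoking \cref{thm:appmain1}~\cref{thm:appmain1sub2} with the same codimension estimates, and then combines it with \cref{thm:preserve_strong} and \cref{thm:main-C1}. The only difference is that you spell out the step the paper calls ``clear,'' namely the first-order optimality argument showing $\rank d(f+\pi)_x\leq\ell-1$ on the Pareto set, which is a correct and welcome addition.
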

As in the following example (\cref{ex:simplicial_generic}), there exists an example such that \cref{eq:generic_s_main} in \cref{thm:simplicial_generic_2} cannot be improved.
Namely, \cref{eq:generic_s_main} is the best evaluation in general.
In \cref{ex:simplicial_generic}, we also explain an advantage of \cref{thm:simplicial_generic_2} from a new perspective of Hausdorff measures compared to \cref{thm:simplicial_generic} from the viewpoint of Lebesgue measures.
Now, in order to show that a given mapping in \cref{ex:simplicial_generic} is strongly convex, we prepare \cref{thm:norm_strong}, which is a well-known result (for the proof, for example, see \cite{Hamada2019b}).
Let $X$ be a convex subset of $\R^m$.
A function $f: X \to \R$ is said to be \emph{convex} if
\begin{align*}
    f(t x + (1 - t) y) \leq t f(x) + (1 - t) f(y)
\end{align*}
for all $x, y \in X$ and all $t \in [0, 1]$.
\begin{lemma}
\label{thm:norm_strong}
Let $X$ be a convex subset of $\R^m$.
Then, a function $f: X \to \R$ is strongly convex with a convexity parameter $\alpha > 0$ if and only if the function $g: X \to \R$ defined by $g(x) = f(x) - \frac{\alpha}{2} \norm{x}^2$ is convex.
\end{lemma}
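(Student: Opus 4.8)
The plan is to prove both directions of the equivalence simultaneously by reducing the defining inequality for convexity of $g$ and the defining inequality for strong convexity of $f$ to literally the same inequality. First I would substitute $g(x)=f(x)-\frac{\alpha}{2}\norm{x}^2$ into the convexity condition $g(tx+(1-t)y)\leq tg(x)+(1-t)g(y)$ and rearrange; after moving all the $f$-terms to one side, convexity of $g$ becomes equivalent to
\begin{align*}
f(tx+(1-t)y)\leq tf(x)+(1-t)f(y)-\frac{\alpha}{2}\left(t\norm{x}^2+(1-t)\norm{y}^2-\norm{tx+(1-t)y}^2\right)
\end{align*}
for all $x,y\in X$ and all $t\in[0,1]$. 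Thus it suffices to identify the bracketed quantity with $t(1-t)\norm{x-y}^2$.

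The key step is the algebraic identity
\begin{align*}
t\norm{x}^2+(1-t)\norm{y}^2-\norm{tx+(1-t)y}^2=t(1-t)\norm{x-y}^2,
\end{align*}
which I would verify by expanding $\norm{tx+(1-t)y}^2=t^2\norm{x}^2+2t(1-t)(x\cdot y)+(1-t)^2\norm{y}^2$, where $x\cdot y$ denotes the standard inner product on $\R^m$, and then collecting the coefficients of $\norm{x}^2$, $\norm{y}^2$ and $x\cdot y$. The coefficient of $\norm{x}^2$ is $t-t^2=t(1-t)$, that of $\norm{y}^2$ is $(1-t)-(1-t)^2=t(1-t)$, and that of $x\cdot y$ is $-2t(1-t)$, so the left-hand side equals $t(1-t)\prn{\norm{x}^2+\norm{y}^2-2(x\cdot y)}=t(1-t)\norm{x-y}^2$. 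Plugging this identity into the displayed inequality, the convexity of $g$ becomes exactly the strong convexity inequality for $f$ with convexity parameter $\alpha$, namely
\begin{align*}
f(tx+(1-t)y)\leq tf(x)+(1-t)f(y)-\frac{1}{2}\alpha t(1-t)\norm{x-y}^2.
\end{align*}
Since every manipulation above is reversible, the two conditions are equivalent, which proves the lemma.

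There is no substantive obstacle here: the entire argument is the single elementary identity above, and the convexity parameter $\alpha$ plays no role beyond being a fixed positive constant carried unchanged through the substitution $g=f-\frac{\alpha}{2}\norm{\,\cdot\,}^2$, so the equivalence holds for one and the same $\alpha$ in both directions. The only point requiring a little care is that convexity of $X$ must be invoked to guarantee $tx+(1-t)y\in X$, so that all function values appearing in the inequalities are defined; this is exactly the standing hypothesis on $X$.
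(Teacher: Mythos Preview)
Your argument is correct: the algebraic identity $t\norm{x}^2+(1-t)\norm{y}^2-\norm{tx+(1-t)y}^2=t(1-t)\norm{x-y}^2$ is verified exactly as you indicate, and once it is in hand the convexity inequality for $g$ and the strong-convexity inequality for $f$ with parameter $\alpha$ are literally the same statement, so the equivalence follows. The paper does not actually supply its own proof of this lemma; it simply notes that the result is well known and cites \cite{Hamada2019b}, so there is no in-paper argument to compare against, but your proof is the standard elementary one and is precisely what one would expect to find in such a reference.
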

\begin{example}[An example of \cref{thm:simplicial_generic_2}]\label{ex:simplicial_generic}{\rm 
Let $f=(f_1,f_2):\R^2\to \R^2$ be the mapping defined by 
$f_i(x_1,x_2)=x_1^2+x_2^2$ for $i=1,2$.
Since $g(x)=f_i(x)-\frac{2}{2} \norm{x}^2=0$ is convex, $f$ is strongly convex by \cref{thm:norm_strong}, where $x=(x_1,x_2)$.
As in \cref{thm:simplicial_generic_2}, set 
\begin{align*}
\Sigma=\set{\pi\in \mathcal{L}(\R^2,\R^2)|\mbox{The problem of minimizing $f+\pi$ is not $C^\infty$ simplicial}}.
\end{align*}
Then, for any real number $s$ satisfying $s>2$, the set $\Sigma$ has $s$-dimensional Hausdorff measure zero in $\mathcal{L}(\R^2,\R^2)$ by \cref{thm:simplicial_generic_2}. 

On the other hand, by the following direct calculation, we obtain $\Sigma=B$, where
\begin{align*}
B=\set{\pi=(\pi_1,\pi_2)\in \mathcal{L}(\R^2, \R^2)|\pi_1=\pi_2}.
\end{align*}
Since $B$ does not have $2$-dimensional Hausdorff measure zero in $\mathcal{L}(\R^2,\R^2)$, we cannot improve the assumption $s>2$.

Now, we show $\Sigma=B$.
First, in order to show that $\Sigma\subset B$, we will show that $\mathcal{L}(\R^2, \R^2)\setminus B\subset \mathcal{L}(\R^2, \R^2)\setminus\Sigma$.
Let $\pi \in\mathcal{L}(\R^2,\R^2)\setminus B$ be an arbitrary element.
Let $H:\R^2\to \R^2$ be the diffeomorphism defined by $H(X_1,X_2)=(X_1-X_2,X_2)$.
As $f_1=f_2$, we obtain $H\circ (f+\pi)=(\pi_1-\pi_2,f_2+\pi_2)$.
Since $\pi_1-\pi_2$ is a linear function satisfying $\pi_1-\pi_2\not=0$, it follows that $\rank d(H\circ (f+\pi))_x\geq 1$ for any $x\in \R^2$.
As $H$ is a diffeomorphism, we have that $\rank d(f+\pi)_x\geq 1$ for any $x\in \R^2$.
By \cref{thm:main-C1}, the problem of minimizing $f+\pi$ is $C^\infty$ simplicial.
Namely, we obtain $\pi\in \mathcal{L}(\R^2,\R^2)\setminus\Sigma$.

Next, we will show that $B\subset\Sigma$.
Let $\pi=(\pi_1,\pi_2)\in B$ be an arbitrary element.
Set $\pi_1(x_1,x_2)=\pi_2(x_1,x_2)=a_{1}x_1+a_{2}x_2$, where $a_{1},a_{2}\in \R$.
Since 
\begin{align*}
(f_i+\pi_i)(x_1,x_2)&=x_1^2+x_2^2+a_{1}x_1+a_{2}x_2
\\
&=\left(x_1+\frac{a_{1}}{2}\right)^2+\left(x_2+\frac{a_{2}}{2}\right)^2-\frac{a_{1}^2+a_{2}^2}{4}
\end{align*}
for $i=1,2$, we obtain $X^*(f+\pi)=\Set{(-\frac{a_{1}}{2},-\frac{a_{2}}{2})}$ $(\subset \R^2)$.
Hence, the problem of minimizing $f+\pi$ is not $C^0$ simplicial (and hence, not $C^\infty$ simplicial).
Namely, we obtain $\pi\in \Sigma$.

Finally, by using this example, we explain an advantage of \cref{thm:simplicial_generic_2}  compared to \cref{thm:simplicial_generic}.
Since a set whose Hausdorff dimension is equal to $3$, such as a $3$-dimensional sphere, has Lebesgue measure zero in $\mathcal{L}(\R^2,\R^2)$, we cannot exclude the possibility that the bad set $\Sigma$ is such a ``$3$-dimensional set" by \cref{thm:simplicial_generic}.
On the other hand, by using \cref{thm:simplicial_generic_2}, we can conclude that $\Sigma$ is never equal to  such a ``$3$-dimensional set" since $\HD_{\mathcal{L}(\R^2,\R^2)}(\Sigma)\leq 2$.

}
\end{example}

\section{Proof of \cref{thm:simplicial_generic_2}}\label{sec:simpliciality_generic_proof}

Since \cref{thm:simplicial_generic_2} clearly holds by combining the following two results (\cref{thm:preserve_strong,thm:simplicial_transverse}) and \cref{thm:main-C1}, it is sufficient to prove \cref{thm:simplicial_transverse}.
\begin{lemma}[\cite{Hamada2019}]\label{thm:preserve_strong}Let $f: \R^m \to \R^\ell$ be a strongly convex mapping.
Then, for any $\pi \in \mathcal{L}(\R^m, \R^\ell)$, the mapping $f+\pi:\R^m\to \R^\ell$ is also strongly convex.
\end{lemma}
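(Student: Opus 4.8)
The plan is to reduce the statement to a single component and then apply \cref{thm:norm_strong}. Write $f=(f_1\ld f_\ell)$, and for $\pi\in\mathcal{L}(\R^m,\R^\ell)$ write $\pi=(\pi_1\ld \pi_\ell)$, where each $\pi_i:\R^m\to\R$ is a linear, hence affine, hence convex function. Since $f+\pi=(f_1+\pi_1\ld f_\ell+\pi_\ell)$ and, by definition, strong convexity of a mapping into $\R^\ell$ means strong convexity of every component, it suffices to show that $f_i+\pi_i$ is strongly convex for each fixed $i\in L$.

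Fix $i$ and let $\alpha_i>0$ be a convexity parameter of $f_i$. By \cref{thm:norm_strong}, the function $g_i(x)=f_i(x)-\frac{\alpha_i}{2}\norm{x}^2$ is convex on $\R^m$. Then
\begin{align*}
(f_i+\pi_i)(x)-\frac{\alpha_i}{2}\norm{x}^2=g_i(x)+\pi_i(x)
\end{align*}
is the sum of a convex function and an affine function, hence convex, so \cref{thm:norm_strong} applied in the converse direction shows that $f_i+\pi_i$ is strongly convex with the same convexity parameter $\alpha_i$. Since $i\in L$ was arbitrary, $f+\pi$ is strongly convex.

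I do not expect any genuine obstacle here; the lemma is elementary. If one prefers to avoid \cref{thm:norm_strong}, an equally short route is to add the identity $\pi_i(tx+(1-t)y)=t\pi_i(x)+(1-t)\pi_i(y)$, valid by linearity of $\pi_i$, to the strong-convexity inequality satisfied by $f_i$; the quadratic error term $\frac{1}{2}\alpha_i t(1-t)\norm{x-y}^2$ is untouched, yielding exactly the strong-convexity inequality for $f_i+\pi_i$. The only mild point worth stating explicitly is that the convexity parameter does not deteriorate under the perturbation, which both arguments make transparent.
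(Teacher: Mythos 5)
Your proof is correct. The paper itself gives no proof of this lemma---it is simply quoted from \cite{Hamada2019}---so there is nothing to compare against, but both of your arguments are valid: the componentwise reduction is immediate from the paper's definition of strong convexity of a mapping into $\R^\ell$, and either the reduction to \cref{thm:norm_strong} (adding an affine function preserves convexity of $f_i(x)-\frac{\alpha_i}{2}\norm{x}^2$) or the direct addition of the linearity identity $\pi_i(tx+(1-t)y)=t\pi_i(x)+(1-t)\pi_i(y)$ to the strong-convexity inequality settles the claim, with the convexity parameter unchanged.
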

\begin{lemma}\label{thm:simplicial_transverse}
Let $f:\R^m\to \R^\ell$ $(m\geq \ell)$ be a $C^r$ mapping $(r\geq 2)$.
Set 
\begin{align*}
\Sigma=\set{\pi\in \mathcal{L}(\R^m,\R^\ell)|\mbox{There exists $x\in \R^m$ such that $\rank d(f+\pi)_x\leq \ell-2$}}.
\end{align*}
If $m-2\ell+4>0$, then for any non-negative real number $s$ satisfying 
\begin{align}\label{eq:generic_s}
s>m\ell-(m-2\ell+4),
\end{align}
the set $\Sigma$ has $s$-dimensional Hausdorff measure zero in $\mathcal{L}(\R^m,\R^\ell)$. 
\end{lemma}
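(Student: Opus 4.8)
The plan is to deduce \cref{thm:simplicial_transverse} from \cref{thm:appmain1} by choosing, in the notation of that theorem, $X=V=\R^m$, the immersion to be the identity map $\id\colon\R^m\to\R^m$, and the mapping $g\colon V\to\R^\ell$ to be the given $f$; then $(g+\pi)\circ\id=f+\pi$. Since $m\ge\ell$ we have $v=\min\set{m,\ell}=\ell$, so $\corank d(f+\pi)_x=\ell-\rank d(f+\pi)_x$, and the condition ``$\rank d(f+\pi)_x\le\ell-2$'' is equivalent to $j^1(f+\pi)(x)\in S^k(\R^m,\R^\ell)$ for some integer $k\ge 2$. If $\ell\le 1$ then $\Sigma=\emptyset$ and there is nothing to prove, so I assume $\ell\ge 2$.

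First I would carry out the codimension bookkeeping: with $n=\dim X=m$ and $v=\ell$ one has $\codim S^k(\R^m,\R^\ell)=(m-\ell+k)k$, hence $\dim X-\codim S^k(\R^m,\R^\ell)=m-(m-\ell+k)k$. For $k=2$ this equals $-(m-2\ell+4)$, which is negative by the hypothesis $m-2\ell+4>0$; and since $k\mapsto(m-\ell+k)k$ is increasing for $k>0$ (as $m-\ell\ge0$), the quantity $\dim X-\codim S^k(\R^m,\R^\ell)$ is, for every integer $k\ge2$, at most $-(m-2\ell+4)$ and in particular negative. Thus for each integer $k$ with $2\le k\le\ell$, \cref{thm:appmain1}~\cref{thm:appmain1sub2} applies to the pair $(\id,f)$; writing $\Sigma_k$ for the set of $\pi\in\mathcal{L}(\R^m,\R^\ell)$ such that $j^1(f+\pi)$ is not transverse to $S^k(\R^m,\R^\ell)$, we obtain from \cref{thm:appmain1}~\cref{thm:appmain1sub2_h} that $\Sigma_k$ has $s$-dimensional Hausdorff measure zero in $\mathcal{L}(\R^m,\R^\ell)$ for every real $s>m\ell+\dim X-\codim S^k(\R^m,\R^\ell)$, and from \cref{thm:appmain1}~\cref{thm:appmain1sub2_e} that $j^1(f+\pi)(\R^m)\cap S^k(\R^m,\R^\ell)=\emptyset$ whenever $\pi\notin\Sigma_k$. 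Because $m\ell+\dim X-\codim S^k(\R^m,\R^\ell)\le m\ell-(m-2\ell+4)$ for all $k\ge2$, the assumption \cref{eq:generic_s} makes the hypothesis on $s$ hold simultaneously for $k=2\ld\ell$; note also that $m\ell-(m-2\ell+4)\ge\dim X=m\ge0$, so the non-negativity requirement on $s$ is automatically satisfied.

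Next I would verify the inclusion $\Sigma\subset\bigcup_{k=2}^{\ell}\Sigma_k$. Indeed, if $\pi\in\Sigma$, pick $x\in\R^m$ with $\rank d(f+\pi)_x\le\ell-2$; then $\corank d(f+\pi)_x=k_0$ for some $k_0$ with $2\le k_0\le\ell$, so $j^1(f+\pi)(x)\in S^{k_0}(\R^m,\R^\ell)$, and the contrapositive of the emptiness statement above (applied with $k=k_0$) forces $\pi\in\Sigma_{k_0}$. (One in fact gets equality, but only this inclusion is needed.) Since each $\Sigma_k$ with $2\le k\le\ell$ has $s$-dimensional Hausdorff measure zero and a finite union of such sets again has $s$-dimensional Hausdorff measure zero, $\Sigma$ has $s$-dimensional Hausdorff measure zero, completing the proof of the lemma; \cref{thm:simplicial_generic_2} then follows by combining \cref{thm:simplicial_transverse} with \cref{thm:preserve_strong} and \cref{thm:main-C1}, as indicated just before the lemma.

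I do not expect a serious obstacle: the one point that needs care is the reduction $\Sigma\subset\bigcup_k\Sigma_k$, where the emptiness-of-intersection conclusion \cref{thm:appmain1}~\cref{thm:appmain1sub2_e} is essential — it is precisely what allows the ``bad'' set $\Sigma$ to be covered by the finitely many $\Sigma_k$ without requiring transversality of $j^1(f+\pi)$ to $S^k(\R^m,\R^\ell)$ to hold on the nose. Everything else is the codimension computation above, and then the bridge \cref{thm:main-C1}, which converts the condition ``$\rank d(f+\pi)_x=\ell-1$ for all $x\in X^*(f+\pi)$'' into $C^{r-1}$ simpliciality of the problem of minimizing $f+\pi$.
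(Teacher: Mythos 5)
Your proposal is correct and follows essentially the same route as the paper: reduce to $\ell\geq 2$, apply \cref{thm:appmain1}~\cref{thm:appmain1sub2} to $S^k(\R^m,\R^\ell)$ for each $2\leq k\leq\ell$ (noting $\codim S^k\geq\codim S^2=2(m-\ell+2)>m$), and use the emptiness-of-intersection conclusion to cover $\Sigma$ by the finitely many $\Sigma_k$. The codimension bookkeeping and the handling of the non-negativity of $s$ also match the paper's argument.
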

\begin{remark}\label{rem:generic_s}
{\rm We give the following remarks on \cref{thm:simplicial_transverse}.
\begin{enumerate}[(1)]
\item \label{rem:generic_s_1}
In the case $\ell=1$, note that $\Sigma=\varnothing$ and  $m\ell-(m-2\ell+4)=-2$.
Thus, in this case, since the set $\Sigma$ $(=\varnothing)$ has $0$-dimensional Hausdorff measure zero in $\mathcal{L}(\R^m,\R)$, \cref{thm:simplicial_transverse} clearly holds.
\item \label{rem:generic_s_2}
In the case $\ell\geq 2$, since $m\geq \ell$, we have $\codim S^2(\R^m,\R^\ell)=2(m-\ell+2)$.
Thus, the inequality \cref{eq:generic_s} implies that 
\begin{align*}
s>m\ell-(m-2\ell+4)=m\ell+m-2(m-\ell+2)=m\ell+m-\codim S^2(\R^m,\R^\ell).
\end{align*}
\end{enumerate}}
\end{remark}
\begin{proof}[Proof of \cref{thm:simplicial_transverse}]
By \cref{rem:generic_s}~\cref{rem:generic_s_1}, it is sufficient to consider the case $\ell \geq 2$.
As in \cref{rem:generic_s}~\cref{rem:generic_s_2}, we have
\begin{align*}
\codim S^2(\R^m, \R^\ell)=2(m-\ell+2).
\end{align*}
Since $m-2\ell+4>0$, we also have $\codim S^2(\R^m, \R^\ell)>m$.

Let $k$ be an integer satisfying $2\leq k\leq \ell$.
As in \cref{thm:appmain1}, set 
\begin{align*}
\Sigma_k=\set{\pi\in \mathcal{L}(\R^m,\R^\ell)|\mbox{$j^1(f+\pi)$ is not transverse to $S^k(\R^m,\R^\ell)$}}.
\end{align*}
It follows that
\begin{align*}
m-\codim S^k(\R^m, \R^\ell)\leq m-\codim S^2(\R^m, \R^\ell)<0.
\end{align*}
By \cref{rem:generic_s}~\cref{rem:generic_s_2}, note that \red{the} real number $s$ in \cref{eq:generic_s} satisfies that
\begin{align*}
s>m\ell+m-\codim S^k(\R^m,\R^\ell).
\end{align*}
Since $r\geq 2$, by \cref{thm:appmain1}~\cref{thm:appmain1sub2}, we have the following:
\begin{enumerate}
\item [(a)]
The set $\Sigma_k$ has s-dimensional Hausdorff measure zero in $\mathcal{L}(\R^m, \R^\ell)$.
\item [(b)]
For any $\pi \in \mathcal{L}(\R^m, \R^\ell)\setminus\Sigma_k$, we have $j^1(f+\pi)(\R^m)\cap S^k(\R^m, \R^\ell)=\varnothing$.
\end{enumerate}

By (b), it is clearly seen that $\Sigma=\bigcup_{k=2}^\ell \Sigma_k$.
By (a), the set $\Sigma$ has $s$-dimensional Hausdorff measure zero in $\mathcal{L}(\R^m, \R^\ell)$.
\end{proof}
\section*{Acknowledgements}
The author is most grateful to the anonymous reviewers for carefully reading the first manuscript of this paper and for giving invaluable suggestions.
The author was supported by JSPS KAKENHI Grant Number JP21K13786.
He is also grateful for the opportunity to give a talk on \cref{sec:appmulti,sec:simpliciality_generic_proof} at 2021 IMI Joint Use Research Program, Short-term Joint Research “Mathematics for Evolutionary Computation” in Kyushu
University.



\begin{thebibliography}{10}

\bibitem{Bruce2000}
James~William Bruce and Neil Kirk.
\newblock Generic projections of stable mappings.
\newblock {\em Bull. Lond. Math. Soc.}, 32(6):718--728, 2000.

\bibitem{Moreira2001}
Carlos Gustavo~Tamm de~Araujo~Moreira.
\newblock Hausdorff measures and the {M}orse-{S}ard theorem.
\newblock {\em Publ. Mat.}, 45(1):149--162, 2001.

\bibitem{Golubitsky1973}
Martin Golubitsky and Victor Guillemin.
\newblock {\em Stable Mappings and Their Singularities}, volume~14 of {\em
  Graduate Texts in Mathematics}.
\newblock Springer, New York, 1973.

\bibitem{Hamada2019}
Naoki Hamada, Kenta Hayano, Shunsuke Ichiki, Yutaro Kabata, and Hiroshi
  Teramoto.
\newblock Topology of {Pareto} sets of strongly convex problems.
\newblock {\em SIAM J. Optim.}, 30(3):2659--2686, 2020.

\bibitem{Hamada2019b}
Naoki Hamada and Shunsuke Ichiki.
\newblock Simpliciality of strongly convex problems.
\newblock {\em J. Math.\ Soc.\ Japan}, 73(3):965--982, 2021.

\bibitem{Ichiki2018C}
Shunsuke Ichiki.
\newblock Composing generic linearly perturbed mappings and
  immersions/injections.
\newblock {\em J. Math.\ Soc.\ Japan}, 70(3):1165--1184, 2018.

\bibitem{Ichiki2018T}
Shunsuke Ichiki.
\newblock Transversality theorems on generic linearly perturbed mappings.
\newblock {\em Methods Appl. Anal.}, 25(4):323--335, 2018.

\bibitem{Ichiki2022a}
Shunsuke Ichiki.
\newblock A refined version of parametric transversality theorems.
\newblock {\em J. Geom. Anal.}, 32(9), 2022.

\bibitem{Kobayashi2019}
Ken Kobayashi, Naoki Hamada, Akiyoshi Sannai, Akinori Tanaka, Kenichi Bannai,
  and Masashi Sugiyama.
\newblock B\'ezier simplex fitting: Describing {Pareto} fronts of simplicial
  problems with small samples in multi-objective optimization.
\newblock In {\em Proceedings of the Thirty-Third {AAAI} Conference on
  Artificial Intelligence}, volume~33 of {\em AAAI-19}, pages 2304--2313, 2019.

\bibitem{Kuhn1967}
Harold~W Kuhn.
\newblock On a pair of dual nonlinear programs.
\newblock {\em Nonlinear Programming}, 1:37--54, 1967.

\bibitem{Mather1973}
John~Norman Mather.
\newblock Generic projections.
\newblock {\em Ann. of Math. (2)}, 98:226--245, 1973.

\bibitem{Norton1991}
Alec Norton.
\newblock A \mbox{$C^{1\times \infty}$} function with an interval of critical
  values.
\newblock {\em Indiana Univ. Math. J.}, 40(4):1483--1488, 1991.

\end{thebibliography}
\end{document}